  \pgfplotsset{compat = 1.17}
  \tikzset{external/system call = {%
    pdflatex \tikzexternalcheckshellescape
      -halt-on-error
      -interaction=batchmode
      -jobname "\image" "\texsource"}}
\newcommand{%
  \tikzexternalenable%
  \tikzsetnextfilename{}%
  \input{graphics/.tikz}%
  \tikzexternaldisable%
}[1]{%
  \tikzexternalenable%
  \tikzsetnextfilename{#1}%
  \input{graphics/#1.tikz}%
  \tikzexternaldisable%
}
\newcommand{\trans}{\ensuremath{\mkern-1.5mu\mathsf{T}}}
\newcommand{\herm}{\ensuremath{\mathsf{H}}}
\DeclareMathOperator{\diag}{diag}
\DeclareMathOperator{\trace}{tr}
\DeclareMathOperator{\real}{Re}
\DeclareMathOperator{\imag}{Im}
\DeclareMathOperator*{\argmin}{arg\,min}
\newcommand*{\defeq}{\ensuremath{\stackrel{\operatorname{def}}{=}}}
\newcommand{\frob}{\ensuremath{\operatorname{F}}}
\DeclareMathOperator{\ds}{d}
\newcommand{\C}{\ensuremath{\mathbb{C}}}
\newcommand{\N}{\ensuremath{\mathbb{N}}}
\newcommand{\Cn}{\ensuremath{\C^{n}}}
\newcommand{\Cp}{\ensuremath{\C^{p}}}
\newcommand{\Cm}{\ensuremath{\C^{m}}}
\newcommand{\Cnn}{\ensuremath{\C^{n \times n}}}
\newcommand{\Cpn}{\ensuremath{\C^{p \times n}}}
\newcommand{\Cpm}{\ensuremath{\C^{p \times m}}}
\newcommand{\R}{\ensuremath{\mathbb{R}}}
\newcommand{\Rn}{\ensuremath{\R^{n}}}
\newcommand{\Rm}{\ensuremath{\R^{m}}}
\newcommand{\Rp}{\ensuremath{\R^{p}}}
\newcommand{\Rnn}{\ensuremath{\R^{n \times n}}}
\newcommand{\Rnm}{\ensuremath{\R^{n \times m}}}
\newcommand{\Rpn}{\ensuremath{\R^{p \times n}}}
\newcommand{\Rnfo}{\ensuremath{\R^{\nfo}}}
\newcommand{\Rnnfo}{\ensuremath{\R^{\nfo \times \nfo}}}
\newcommand{\Rnmfo}{\ensuremath{\R^{\nfo \times m}}}
\newcommand{\Rpnfo}{\ensuremath{\R^{p \times \nfo}}}
\newcommand{\Rnr}{\ensuremath{\R^{n \times r}}}
\newcommand{\Rr}{\ensuremath{\R^{r}}}
\newcommand{\Rrr}{\ensuremath{\R^{r \times r}}}
\newcommand{\Rpr}{\ensuremath{\R^{p \times r}}}
\newcommand{\Rrm}{\ensuremath{\R^{r \times m}}}
\renewcommand{\i}{\ensuremath{\mathfrak{i}}}
\newcommand{\Sys}{\ensuremath{\mathcal{G}}}
\newcommand{\SysFreq}{\Sys^{\operatorname{s}}}
\newcommand{\Sysred}{\ensuremath{\widetilde{\Sys}}}
\newcommand{\Sysfo}{\ensuremath{\Sys_{\fo}}}
\newcommand{\relerr}{\ensuremath{\operatorname{relerr}}}
\newcommand{\nfo}{\ensuremath{n_{\operatorname{fo}}}}
\newcommand{\fo}{\ensuremath{\operatorname{fo}}}
\newcommand{\omegaMin}{\ensuremath{\omega}_{\operatorname{min}}}
\newcommand{\omegaMax}{\ensuremath{\omega}_{\operatorname{max}}}
\newcommand{\leftIso}{\boldsymbol{\Psi}_{\vel}}
\newcommand{\rightIso}{\boldsymbol{\Psi}_{\pos}}
\newcommand{\pos}{\ensuremath{\operatorname{p}}}
\newcommand{\vel}{\ensuremath{\operatorname{v}}}
\newcommand{\inp}{\ensuremath{\operatorname{u}}}
\def\checkmark{\tikz\fill[scale=0.4](0,.35) -- (.25,0) -- (1,.7) -- (.25,.15) -- cycle;}
\newcommand{\BA}{\ensuremath{\boldsymbol{A}}}
\newcommand{\BB}{\ensuremath{\boldsymbol{B}}}
\newcommand{\BC}{\ensuremath{\boldsymbol{C}}}
\newcommand{\BD}{\ensuremath{\boldsymbol{D}}}
\newcommand{\BE}{\ensuremath{\boldsymbol{E}}}
\newcommand{\BK}{\ensuremath{\boldsymbol{K}}}
\newcommand{\BM}{\ensuremath{\boldsymbol{M}}}
\newcommand{\BG}{\ensuremath{\boldsymbol{G}}}
\newcommand{\BJ}{\ensuremath{\boldsymbol{J}}}
\newcommand{\BL}{\ensuremath{\boldsymbol{L}}}
\newcommand{\BP}{\ensuremath{\boldsymbol{P}}}
\newcommand{\BQ}{\ensuremath{\boldsymbol{Q}}}
\newcommand{\BR}{\ensuremath{\boldsymbol{R}}}
\newcommand{\BS}{\ensuremath{\boldsymbol{S}}}
\newcommand{\BT}{\ensuremath{\boldsymbol{T}}}
\newcommand{\BI}{\ensuremath{\boldsymbol{I}}}
\newcommand{\BV}{\ensuremath{\boldsymbol{V}}}
\newcommand{\BU}{\ensuremath{\boldsymbol{U}}}
\newcommand{\BY}{\ensuremath{\boldsymbol{Y}}}
\newcommand{\BSigma}{\ensuremath{\boldsymbol{\Sigma}}}
\newcommand{\BW}{\ensuremath{\boldsymbol{W}}}
\newcommand{\BX}{\ensuremath{\boldsymbol{X}}}
\newcommand{\Be}{\ensuremath{\boldsymbol{e}}}
\newcommand{\Bg}{\ensuremath{\boldsymbol{g}}}
\newcommand{\Bj}{\ensuremath{\boldsymbol{j}}}
\newcommand{\Bk}{\ensuremath{\boldsymbol{k}}}
\newcommand{\Bq}{\ensuremath{\boldsymbol{q}}}
\newcommand{\Bu}{\ensuremath{\boldsymbol{u}}}
\newcommand{\Bw}{\ensuremath{\boldsymbol{w}}}
\newcommand{\Bx}{\ensuremath{\boldsymbol{x}}}
\newcommand{\By}{\ensuremath{\boldsymbol{y}}}
\newcommand{\Bz}{\ensuremath{\boldsymbol{z}}}
\newcommand{\Bzero}{\ensuremath{\boldsymbol{0}}}
\newcommand{\Bone}{\ensuremath{\boldsymbol{1}}}
\newcommand{\tf}{\ensuremath{\BG}}
\newcommand{\BBu}{\ensuremath{\BB_{\inp}}}
\newcommand{\BCp}{\ensuremath{\BC_{\pos}}}
\newcommand{\BCv}{\ensuremath{\BC_{\vel}}}
\newcommand{\BPp}{\ensuremath{\BP_{\pos}}}
\newcommand{\BPv}{\ensuremath{\BP_{\vel}}}
\newcommand{\BQp}{\ensuremath{\BQ_{\pos}}}
\newcommand{\BQv}{\ensuremath{\BQ_{\vel}}}
\newcommand{\BRp}{\ensuremath{\BR_{\pos}}}
\newcommand{\BLv}{\ensuremath{\BL_{\vel}}}
\newcommand{\BRcheckp}{\ensuremath{\skew3\check{\BR}_{\pos}}}
\newcommand{\BLcheckv}{\ensuremath{\skew2\check{\BL}_{\vel}}}
\newcommand{\BUcheck}{\ensuremath{\skew2\check{\BU}}}
\newcommand{\BSigmacheck}{\ensuremath{\check{\BSigma}}}
\newcommand{\BYcheck}{\ensuremath{\skew2\check{\BY}}}
\newcommand{\BMr}{\ensuremath{\skew2\widetilde{\BM}}}
\newcommand{\BDr}{\ensuremath{\skew2\widetilde{\BD}}}
\newcommand{\BKr}{\ensuremath{\skew2\widetilde{\BK}}}
\newcommand{\BCpr}{\ensuremath{\skew2\widetilde{\BC}_{\pos}}}
\newcommand{\BCvr}{\ensuremath{\skew2\widetilde{\BC}_{\vel}}}
\newcommand{\BBur}{\ensuremath{\skew2\widetilde{\BB}_{\inp}}}
\newcommand{\Byr}{\ensuremath{\widetilde{\By}}}
\newcommand{\BGr}{\skew2\ensuremath{\widetilde{\BG}}}
\newcommand{\Bxr}{\skew2\ensuremath{\widetilde{\Bx}}}
\newcommand{\Bvarphi}{\ensuremath{\boldsymbol{\varphi}}}
\newcommand{\Bphi}{\ensuremath{\boldsymbol{\phi}}}
\newcommand{\posTf}{\ensuremath{\BG_{\pos}}}
\newcommand{\velTf}{\ensuremath{\BG_{\vel}}}
\newcommand{\bbL}{\ensuremath{\mathbb{L}}}
\newcommand{\bbLs}{\ensuremath{\mathbb{L}^{\operatorname{s}}}}
\newcommand{\bbLM}{\ensuremath{\mathbb{M}}}
\newcommand{\bbLK}{\ensuremath{\mathbb{K}}}
\newcommand{\bbBu}{\ensuremath{\mathbb{B}_{\inp}}}
\newcommand{\bbCp}{\ensuremath{\mathbb{C}_{\pos}}}
\newcommand{\bbCv}{\ensuremath{\mathbb{C}_{\vel}}}
\newcommand{\CE}{\ensuremath{\mathcal{E}}}
\newcommand{\CC}{\ensuremath{\mathcal{C}}}
\newcommand{\CH}{\ensuremath{\mathcal{H}}}
\newcommand{\CJ}{\ensuremath{\mathcal{J}}}
\newcommand{\objRayleigh}{\ensuremath{\CJ_{k,\operatorname{R}}}}
\newcommand{\BvarphiRayleigh}{\ensuremath{\Bvarphi_{\operatorname{R}}}}
\newcommand{\objStruct}{\ensuremath{\CJ_{k,\operatorname{S}}}}
\newcommand{\BvarphiStruct}{\ensuremath{\Bvarphi_{\operatorname{S}}}}
\newcommand{\pertBeta}{\ensuremath{\Delta_\beta}}
\newcommand{\alphaStar}{\ensuremath{\alpha_\star}}
\newcommand{\betaStar}{\ensuremath{\beta_\star}}
\newcommand{\etaStar}{\ensuremath{\eta_\star}}
\newcommand{\alphaPlus}{\ensuremath{\alpha_+}}
\newcommand{\betaPlus}{\ensuremath{\beta_+}}
\newcommand{\etaPlus}{\ensuremath{\eta_+}}
\newcommand{\alphaMinus}{\ensuremath{\alpha_-}}
\newcommand{\betaMinus}{\ensuremath{\beta_-}}
\newcommand{\etaMinus}{\ensuremath{\eta_-}}
\newcommand{\intAlpha}{\ensuremath{\mathcal{I}_\alpha}}
\newcommand{\intBeta}{\ensuremath{\mathcal{I}_\beta}}
\newcommand{\intEta}{\ensuremath{\mathcal{I}_\eta}}
\newcommand{\soQuadbt}{\ensuremath{\mathsf{soQuadpvBT}}}
\newcommand{\soLoewner}{\ensuremath{\mathsf{soLoewner}}}
\newcommand{\foQuadbt}{\ensuremath{\mathsf{foQuadBT}}}
\newcommand{\Btr}{\ensuremath{\mathsf{BT}}}
\newcommand{\soBt}{\ensuremath{\mathsf{soBT}}}
\newcommand{\sopvBt}{\ensuremath{\mathsf{sopvBT}}}
\newcommand{\SPD}{\ensuremath{SPD}}
\newcommand{\HPD}{\ensuremath{HPD}}
\newcommand{\MSD}{\ensuremath{MSD}}
\theoremstyle{plain}\newtheorem{theorem}{Theorem}
\theoremstyle{plain}
\theoremstyle{definition}\newtheorem{remark}{Remark}
\definecolor{matlabblue}{HTML}{0072BD}
\definecolor{matlaborange}{HTML}{D95319}
\definecolor{matlabyellow}{HTML}{EDB120}
\definecolor{matlabpurple}{HTML}{7E2F8E}
\definecolor{matlabgreen}{HTML}{77AC30}
\definecolor{matlablightblue}{HTML}{4DBEEE}
\definecolor{matlabred}{HTML}{A2142F}
\tikzstyle{sline} = [
\newcommand{\plotfontsize}{\footnotesize}
\begin{document}
  

\title{Data-driven balanced truncation for second-order systems with
  generalized proportional damping}
  
\author[$\ast$]{Sean Reiter}
\affil[$\ast$]{Courant Institute of Mathematical Sciences, New York University
  New York, NY 10012 USA.\authorcr
  \email{s.reiter@nyu.edu}, \orcid{0000-0002-7510-1530}}
  
\author[$\dagger$]{Steffen W. R. Werner}
\affil[$\dagger$]{Department of Mathematics,
  Division of Computational Modeling and Data Analytics, and
  National Security Institute,
  Virginia Tech, Blacksburg, VA 24061, USA.\authorcr
  \email{steffen.werner@vt.edu}, \orcid{0000-0003-1667-4862}}
  
\shorttitle{Data-driven second-order balanced truncation}
\shortauthor{S. Reiter, S. W. R. Werner}
\shortdate{2026-05-22}
\shortinstitute{}
  
\keywords{%
  data-driven modeling,
  reduced-order modeling,
  second-order systems,
  mechanical systems,
  balanced truncation,
  transfer function data
}

\msc{%
  37N35, 
  65F55, 
  93A15, 
  93B15, 
  93C57  
}
  
\abstract{%
  Structured reduced-order modeling is a central component in 
  the computer-aided design of control systems in which cheap-to-evaluate
  low-di\-men\-sion\-al models with physically meaningful internal structures
  are computed.
  In this work, we develop a new approach for the structured data-driven
  surrogate modeling of linear dynamical systems described by second-order time
  derivatives via balanced truncation model-order reduction.
  The proposed method is a data-driven reformulation of position-velocity
  balanced truncation for second-order systems and generalizes the
  quadrature-based balanced truncation for unstructured first-order systems to
  the second-order case.
  The computed surrogates encode a generalized proportional damping structure,
  and we propose a computational procedure for inferring the damping coefficients from data by minimizing a
  least-squares error over the coefficients.
  Several numerical examples demonstrate the effectiveness of the proposed
  method.
}

\novelty{}

\maketitle


\section{Introduction}%
\label{sec:intro}

The mathematical modeling of, e.g., mechanical or electrical
structures~\cite[Ch.~1]{Wer21}, typically results in linear dynamical
systems described by second-order differential equations of the form
\begin{equation} \label{eqn:soSys}
   \Sys: \left\{
    \begin{aligned}
      \BM \ddot{\Bx}(t) + \BD \dot{\Bx}(t) + \BK \Bx(t) & = \BBu \Bu(t),
        \quad \Bx(0) = \dot{\Bx}(0) = \Bzero,\\
      \By(t) & = \BCp \Bx(t) + \BCv \dot{\Bx}(t),
    \end{aligned}
  \right.
\end{equation}
where $\BM, \BD, \BK \in \Rnn$ model the internal system dynamics,
$\BBu \in \Rnm$ describes the external inputs and
$\BCp, \BCv \in \Rpn$ model the quantities of interest.
The three matrices $\BM$, $\BD$ and $\BK$ are typically referred to as
\emph{mass}, \emph{damping} and \emph{stiffness} terms due to their physical
interpretation.
The internal behavior of the system~\cref{eqn:soSys} is given by the states
$\Bx\colon [0, \infty) \to \Rn$, which can be externally influenced by the
inputs $\Bu\colon [0, \infty) \to \Rm$ and the quantities of interest are
observed as outputs $\By\colon [0, \infty) \to \Rp$.
An equivalent description of the input-to-output response of~\cref{eqn:soSys}
is given in the frequency domain via the corresponding transfer function
\begin{equation} \label{eqn:soTf}
  \tf(s) = \left(s\BCv + \BCp\right) \left(s^{2} \BM + s\BD +
    \BK\right)^{-1} \BBu,
\end{equation}
with the complex variable $s \in \C$.
Systems with such second-order internal structure~\cref{eqn:soSys} arise in
a variety of applications, ranging from the vibrational analysis of mechanical
and acoustical structures~\cite{Wer21, AumW23, YueM12} to the modeling of
electro-mechanical systems~\cite{Bil05, Bla18a}.

Due to a generally high demand for modeling accuracy, mathematical models
such as~\cref{eqn:soSys} are typically described by a large number of
differential equations, $n \sim 10^{6}$, making their use in
downstream computational tasks such as numerical simulation or optimization
very challenging.
Simultaneously, in complex applications, explicit state-space models of the
form~\cref{eqn:soSys} may be either difficult to obtain or wholly unavailable. 
Instead, the underlying system can only be accessed in the form of data.
This motivates data-driven reduced-order modeling, which is the
construction of low-dimensional surrogate models solely from system data.
In this work, we consider the problem of learning low-dimensional representations of \emph{structured} second-order
systems as in~\cref{eqn:soSys} directly from input-to-output data.
In other words, our goal is to learn another, so-called reduced-order model of the form
\begin{align} \label{eqn:soSysRed}
  \Sysred: \left\{
    \begin{aligned}
      \BMr \ddot{\Bxr}(t) + \BDr \dot{\Bxr}(t) + \BKr \Bxr(t) & = \BBur \Bu(t),
        \quad \Bxr(0) = \dot{\Bxr}(0) = \Bzero,\\
      \Byr(t) & = \BCpr \Bxr(t) + \BCvr \dot{\Bxr}(t),
    \end{aligned}
  \right.
\end{align}
from evaluations of the transfer function~\cref{eqn:soTf} with the
reduced-order quantities $\BMr, \BDr, \BKr \in \Rrr$, $\BBur \in \Rrm$,
$\BCpr, \BCvr \in \Rpr$, $\Bxr \colon[0, \infty) \to \Rr$, and
$\Byr\colon[0, \infty) \to \Rp$, and having a much smaller number
of differential equations $r \ll n$.

Second-order systems~\cref{eqn:soSys} can always be recast in
unstructured (first-order) form by introducing auxiliary variables.
Thus, it is in principle always possible to apply classical data-driven
modeling techniques to realize unstructured models of the underlying dynamics
in~\cref{eqn:soSys}.
However, while it is always possible to write second-order systems in
first-order form, the converse is in general not true~\cite[Sec.~III]{MeyS96}.
Thus, when using unstructured data-driven modeling methods, the underlying
internal system structure is typically not retrievable.
This is usually an undesired outcome because the internal system structure is
what enables the physical reinterpretation of the learned terms, allows for the
use of established computational tools, and preserves structure-inherent
properties.
Further, twice the number of degrees of freedom is required to realize a
first-order system that recovers the input-to-output response of the 
second-order dynamics, and so structured surrogates typically produce more
accurate approximations compared to unstructured ones having the same model
order.
We refer the reader to~\cite{Wer21, SaaSW19} for comparisons of methods for
structured and non-structured surrogate modeling of~\cref{eqn:soSys}.

In recent years, various methods developed for the intrusive \emph{and}
non-intrusive (data-driven) reduced-order modeling of first-order
linear systems have been extended to the second-order case~\cref{eqn:soSys}.
In the intrusive realm, notable examples include generalizations of the
balanced truncation method~\cite{MeyS96, ChaLVetal06, ReiS08, BenW21b, Wer21}
and transfer function interpolation or moment
matching~\cite{BaiMS05, BaiS05a, BeaG09, Wya12, BeaB14}.
Balancing-based methods are of particular interest due to their advantageous
properties and their interpretation as truncation of internal dynamics
associated with small reachability and observability energies.
On the other hand, for data-driven methods, extensions of the interpolatory
Loewner framework~\cite{PonGB22, SchUBetal18, GosGW24},
rational vector-fitting~\cite{WerGG22},
operator inference~\cite{ShaWK22, ShaNTetal24, FilPGetal23} and
the AAA algorithm~\cite{AckGGetal25} have been developed.
In~\cite{GosGB22}, the authors propose a data-driven reformulation of the
balanced truncation (\Btr) method~\cite{Moo81, MulR76}, called
\emph{quadrature-based balanced truncation} (\foQuadbt{}).
In practice, \foQuadbt{} typically produces (approximate) \Btr{} surrogate
models that are nearly identical to their intrusive counterparts.
However, \foQuadbt{} computes unstructured \emph{first-order} \Btr{} models,
and so a tailored extension suitable for the modeling of second-order systems
is needed.

In this work, we propose an extension of the \foQuadbt{} framework
from~\cite{GosGB22} for the data-driven balanced truncation of
second-order dynamical systems~\cref{eqn:soSys}.
Specifically, our method is a data-driven reformulation of the
\emph{position-velocity balanced truncation} (\sopvBt{}) for
second-order systems~\cref{eqn:soSys} proposed by
Reis and Stykel~\cite{ReiS08}; see \Cref{sec:bg} for details.
The major contributions are described as follows.
\begin{itemize}
  \item In \Cref{sec:soQuadBt}, we introduce the structured data-driven balanced
    truncation framework that we propose, \emph{second-order quadrature-based
    position-velocity balanced truncation} (\soQuadbt).
    The framework builds on \Cref{thm:sobtFromData}, which derives the
    intrusive reduced-order quantities in \sopvBt{} from fre\-quen\-cy-response
    data.
    These data are evaluations of the transfer function~\cref{eqn:soTf} and
    its position- and velocity-output subsystems. 
    The proposed method applies to any system that satisfies a generalized
    proportional damping hypothesis, where $\BD$ is expressed as a linear
    combination of $\BM$ and $\BK$ with potentially frequency-dependent
    coefficients $f$ and $g$.
    
  \item In \Cref{sec:theorConsiderations}, we derive theoretical results
    regarding \soQuadbt{}.
    \Cref{thm:stability} shows that \soQuadbt{} preserves asymptotic stability
    when the data are generated by a second-order system~\cref{eqn:soSys} with
    symmetric positive definite (\SPD{}) mass, damping and stiffness matrices,
    and \Cref{thm:error} derives bounds that control the error in the
    reduced-order model matrices in terms of the associated quadrature error.

  \item Because the damping coefficients $f$ and $g$ are typically not
    known a priori, in \Cref{sec:pracConsiderations}, we propose a
    computational procedure to recover these weights from frequency response
    data in the special cases of Rayleigh and structural damping.
    Specifically, \Cref{thm:grads} provides the gradients needed to recover the underlying
    damping parameters from data.
\end{itemize}
Several numerical examples are presented in \Cref{sec:numerics}, which
validate the proposed data-driven modeling method and demonstrate its
effectiveness.
The work is concluded in \Cref{sec:conclusions}.

Parts of the theoretical results presented in \Cref{sec:soQuadBt} were derived in
the course of writing the dissertation of the corresponding
author~\cite{Rei25} and published therein.
In~\cite{WanYWetal25}, the authors independently derive similar results to those contained in~\cite{Rei25} for the case of second-order \emph{velocity} balanced truncation. 
In contrast to~\cite{WanYWetal25}, our work considers position-velocity balanced truncation and is more general in the sense that it is presented for a
broader class of dynamical systems, namely multiple-input/multiple-output
second-order systems with position as well as velocity outputs and
with the generalized proportional damping model~\cref{eqn:dampingModel}.
Additionally, we note that most of the considerations
in \Cref{sec:theorConsiderations,sec:pracConsiderations} have not
been discussed for the velocity balancing approach in~\cite{WanYWetal25}, namely the stability preservation and the computational procedure for recovering unknown damping parameters.


\section{Mathematical background and preliminaries}%
\label{sec:bg}

In this section, we introduce the general damping model that we consider
in this paper, and we remind the reader of the essential details of balanced
truncation (\Btr{}) model reduction for first and second-order systems. 

\subsection{Generalized proportional damping model}%
\label{sec:damping}

Typically, the mass and stiffness matrices in~\cref{eqn:soSys} are assumed to
be constant, while the modeling of damping, i.e., the dissipation and
conservation of energy in the system, can be significantly more complex.
In particular, mathematical models arising in applications such as
vibro-acoustics that involve structural dynamics, acoustic wave propagation, or
frequency-dependent material properties, only exist in the frequency domain.
These models are described by systems of frequency-dependent
algebraic equations of the form
\begin{equation} \label{eqn:soSysLaplace}
  \SysFreq: \left\{
    \begin{aligned}
      \left(s^2\BM + s\BD(s) + \BK \right)\BX(s) & = \BBu \BU(s),\\
      \BY(s) & =\BCp \BX(s) + \BCv \BX(s).
    \end{aligned}
  \right.
\end{equation}
Here, $\BX\colon\C\to\Cn$, $\BU\colon\C\to\Cm$ and $\BY\colon\C\to\Cp$ are the
Laplace transforms of the state, input, and output vectors
from~\cref{eqn:soSys}, respectively, and the internal damping
$\BD \colon \C \to \Cnn$ is frequency dependent.
See, for example,~\cite[Sec.~2]{AumW23} and~\cite{PasA08} for a more detailed
discussion of the different types of internal damping that may appear in the
modeling of second-order dynamical systems.
We note that in the cases where $\BD$ is constant, the time- and
frequency-domain systems $\Sys$ in~\cref{eqn:soSys} and $\SysFreq$
in~\cref{eqn:soSysLaplace} are in fact equivalent formulations related to
each other via the Laplace transform.

In this work, we allow the damping term to be frequency-dependent and consider
the following generalized form of the proportional damping model
\begin{equation} \label{eqn:dampingModel}
  \BD(s) = f(s) \BM + g(s) \BK.
\end{equation}
Thus, we require $\BD$ to be a linear combination of the mass $\BM$ and
stiffness $\BK$ matrices, where the coefficients $f\colon \C \to \C$ and
$g\colon \C \to \C$ are scalar complex-valued frequency-dependent functions.
This generalized framework includes, in particular, the following two classical
damping models.

\paragraph{Rayleigh damping} Also known as proportional damping, the Rayleigh
damping model is given as a constant linear combination of mass and
stiffness matrices
\begin{equation} \label{eqn:Rayleigh}
  \BD(s) = \alpha \BM + \beta \BK,
\end{equation}
with $\alpha, \beta \geq 0$.
This is covered by the general framework~\cref{eqn:dampingModel} by setting
$f(s) = \alpha \geq 0$ and $g(s) = \beta \geq 0$.
The damping parameters $\alpha$ and $\beta$ allow one to choose the frequency range
in which damping is applied, as well as the intensity of the damping.

\paragraph{Structural damping} Also known as hysteretic damping, the structural
damping model describes a constant damping effect over the full frequency range
using
\begin{equation} \label{eqn:structural}
  \BD(s) = \i \frac{\eta}{s} \BK,
\end{equation}
where $\eta \geq 0$ is a material-dependent structural loss factor.
In the general framework~\cref{eqn:dampingModel}, this damping model is given
by setting $f(s) = 0$ and $g(s) = \i \tfrac{\eta}{s}$.


\subsection{Balanced truncation model reduction}%
\label{sec:foBt}

Consider first-order linear systems of the form
\begin{equation} \label{eqn:foSys}
  \Sysfo: \left\{\begin{aligned}
     \BE \dot{\Bq}(t) & = \BA \Bq(t) + \BB \Bu(t),
     \quad \Bq(0) = \Bzero, \\
     \By(t) & = \BC \Bq(t),
  \end{aligned}\right.
\end{equation}
where $\BE, \BA \in \Rnnfo$, $\BB \in \Rnmfo$, $\BC \in \Rpnfo$ 
and $\Bq \colon [0, \infty) \to \Rnfo$.
We assume that $\BE$ is nonsingular, that the realization of the
system~\cref{eqn:foSys} is controllable and observable, and that the
system~\cref{eqn:foSys} is asymptotically stable, i.e., that the eigenvalues of
the matrix pencil $\lambda \BE - \BA$ lie in the open left half of the complex
plane.
The core idea of \Btr{} is the simultaneous diagonalization of two symmetric
positive semi-definite matrices, called system Gramians, and the truncation of
states corresponding to small eigenvalues of these
Gramians~\cite{MulR76, Moo81}.
The system Gramians $\BP, \BE^{\trans} \BQ \BE \in \Rnnfo$, also called the
\emph{controllability} and \emph{observability Gramians}, respectively, are
defined via
\begin{subequations} \label{eqn:foGramians}
\begin{align} \label{eqn:contGramian}
  \BP & =\frac{1}{2\pi} \int\limits_{-\infty}^{\infty} (\i z \BE -\BA)^{-1} \BB
    \BB^{\trans}(\i z \BE - \BA)^{-\herm} \ds z, \quad \text{and}\\
  \label{eqn:obsvGramian}
  \BQ & = \frac{1}{2\pi} \int\limits_{-\infty}^{\infty} (\i z \BE - \BA)^{-\herm}
    \BC^{\trans} \BC (\i z \BE - \BA)^{-1} \ds z.
\end{align}
\end{subequations}
Under the assumptions stated above, the matrices $\BP$ and $\BQ$ are \SPD{}, and thus admit Cholesky factorizations of the form
$\BP = \BR \BR^{\trans}$ and $\BQ = \BL \BL^{\trans}$ where $\BR,\BL\in\Rnnfo$.
The system-theoretic significance of these matrices is that $\BP$ and
$\BE^{\trans} \BQ \BE$ respectively quantify the controllability and
observability of a state $\Bq$.
Then, \Btr{} computes a reduced-order model by removing components of the state
space that correspond to small controllability and observability energies in
the balanced basis.
The appeal of \Btr{} comes from the preservation of asymptotic
stability in the reduced-order model~\cite{PerS82} and the associated
a priori error bound on the $\CH_{\infty}$ norm~\cite{Enn84}.
For further details on balancing-related model reduction for first-order linear
systems~\cref{eqn:foSys}, we refer the reader to~\cite{Ant05, GugA04, BenB17}
and references therein.


\subsection{Balanced truncation for second-order systems}
\label{sec:soBt}

There have been several attempts to generalize the ideas of classical \Btr{} to
second-order systems~\cite{MeyS96, ReiS08, ChaLVetal06, Wer21}. 
The unifying feature of the proposed methods is the reformulation of the
second-order system in~\cref{eqn:soSys} as an equivalent $2n$-dimensional
first-order system~\cref{eqn:foSys}.
This can always be accomplished by introducing the first-order state vector
$\Bq^{\trans} \defeq \begin{bmatrix} \Bx^{\trans} &
\dot{\Bx}^{\trans}\end{bmatrix}$ and re-organizing the second-order dynamics
in~\cref{eqn:soSys} accordingly.
The resulting $2n$-dimensional system~\cref{eqn:foSys} is given in the
so-called \emph{first companion form}, with the system matrices
\begin{equation} \label{eqn:foRealization}
  \BE = \begin{bmatrix} \BI_{n} & \Bzero \\ \Bzero & \BM \end{bmatrix}, \quad
  \BA = \begin{bmatrix} \phantom{-}\Bzero & \phantom{-}\BI_{n} \\ -\BK & -\BD
    \end{bmatrix}, \quad
  \BC = \begin{bmatrix}  \BCp & \BCv \end{bmatrix}, \quad
  \BB = \begin{bmatrix} \Bzero \\ \BBu \end{bmatrix},
\end{equation}
where $\BI_n$ denotes the $n\times n$ identity matrix.
As in \Cref{sec:foBt}, we assume throughout this paper that the mass matrix
$\BM$ in~\cref{eqn:soSys} is nonsingular, that the realization of the
system~\cref{eqn:soSys} is controllable and observable, and that the
system~\cref{eqn:soSys} is asymptotically stable, i.e., the eigenvalues of the
matrix pencil $\lambda^{2} \BM + \lambda \BD + \BK$ lie in the open left half
of the complex plane.
Following these assumptions about the second-order system~\cref{eqn:soSys}, the corresponding first-order realization~\cref{eqn:foRealization}
has a nonsingular $\BE$ matrix, and is asymptotically stable as well as
controllable and observable~\cite{LauA84}.

Let the first-order system Gramians in~\cref{eqn:foGramians}
be partitioned according to the block structure in the companion
form~\cref{eqn:foRealization} as
\begin{equation} \label{eqn:gramPartition}
  \BP=\begin{bmatrix} \BPp & \BP_{12}\\[2pt] \BP_{12}^{\trans} & \BPv
    \end{bmatrix}
  \quad\text{and}\quad
  \BE^{\trans} \BQ \BE = \begin{bmatrix} \BQp & \BQ_{12}\BM\\[2pt]
    \BM^{\trans}\BQ_{12}^{\trans} & \BM^{\trans} \BQv \BM
    \end{bmatrix}.
\end{equation}
The submatrices $\BPp, \BQp \in \Rnn$ are the so-called
\emph{position-controllability and position-ob\-serv\-abil\-i\-ty Gramians},
while $\BPv, \BM^{\trans} \BQv \BM \in \Rnn$ are the
\emph{velocity-controllability and velocity-observability Gramians}.
Because $\BP$ and $\BQ$ are \SPD{}, so too are their diagonal submatrices.
The Gramians $\BPp$ and $\BM^{\trans} \BQv \BM$ can alternatively be expressed
via contour integrals in terms of the input-to-state and state-to-output
mappings of the system~\cref{eqn:soSys} in the frequency domain
\begin{subequations} \label{eqn:soGramians}
\begin{align} \label{eqn:PpIntegral}
  \BPp & = \frac{1}{2\pi} \int_{-\infty}^{\infty}
    \left(-z^{2} \BM + \i z \BD + \BK \right)^{-1} \BBu \BBu^{\trans}
    \left(-z^{2} \BM + \i z \BD + \BK \right)^{-\herm} \ds z,\\
  \label{eqn:QvIntegral}
  \BQv & = \frac{1}{2\pi} \int_{-\infty}^{\infty}
    \left(-z^{2} \BM + \i z \BD + \BK \right)^{-\herm}
      (\BCp + \i z \BCv)^{\herm} \\
  & \nonumber \hphantom{\frac{1}{2\pi} \int_{-\infty}^{\infty}}
    \qquad{}\times{}
      (\BCp + \i z \BCv)
    \left(-z^{2} \BM + \i z \BD + \BK \right)^{-1} \ds z;
\end{align}
\end{subequations}
see, e.g.,~\cite[Prop.~2.1]{PrzPB24} and~\cite[Sec.~4.4]{Bre16}.
The methods proposed in~\cite{MeyS96, ReiS08} correspond to balancing selected
combinations of the second-order controllability and observability Gramians
in~\cref{eqn:gramPartition} with subsequent truncation.
In particular, the so-called \emph{free-velocity balanced truncation}
from~\cite{MeyS96} and the \emph{position-velocity balanced truncation}
(\sopvBt{}) from~\cite{ReiS08} correspond to the simultaneous diagonalization
of $\BPp, \BQp$ and $\BPp, \BM^{\trans} \BQv \BM$, respectively.
Algorithmically, balancing and truncation are achieved simultaneously by using
an appropriate generalization of the square-root algorithm from the first-order
system case~\cite{LauHPetal87, TomP87}.
For a more detailed overview of balanced truncation for second-order systems
and all the different variations, we refer the reader
to~\cite{Wer21, SaaSW19, BenW21b}.

The primary focus of this work lies on the \sopvBt{} method from~\cite{ReiS08}.
The main steps of the approach are summarized in \Cref{alg:pvBT}.
In the case of \emph{symmetric} second-order systems, i.e., the mass, damping,
and stiffness matrices in~\cref{eqn:soSys} are \SPD{} with $\BBu = \BCp^{\trans}$ and $\BCv = \Bzero$,
asymptotic stability is preserved by \sopvBt{}.
In fact, in this particular situation, we have that $\BPp = \BQv$;
see~\cite[Thm.~3.1]{ReiS08}.
Note also that \Cref{alg:pvBT} can be applied if either the position
or velocity-output terms in~\cref{eqn:soSys} are zero.

\begin{algorithm}[t!]
  \SetAlgoHangIndent{1pt}
  \DontPrintSemicolon
  \caption{Second-order position-velocity \Btr{} (\sopvBt{}).} 
  \label{alg:pvBT}

  \KwIn{Second-order system matrices $\BM, \BD, \BK, \BBu, \BCp, \BCv$,
    reduction order $r$.}
  \KwOut{Reduced-order system matrices $\BMr, \BDr, \BKr, \BBur, \BCpr, \BCvr$
    in~\cref{eqn:soSysRed}.}
  
  Compute Cholesky factorizations $\BP = \BR \BR^{\trans}$ and
    $\BQ = \BL \BL^{\trans}$ of the system
    Gramians~\cref{eqn:foGramians} using the companion
    form~\cref{eqn:foRealization}, where the factors are partitioned as
    \begin{equation*}
      \BR = \begin{bmatrix} \BRp \\ \star \end{bmatrix}
      \quad\text{and}\quad
      \BL = \begin{bmatrix} \star \\ \BLv \end{bmatrix},
    \end{equation*}
    with $\BRp, \BLv \in \Rnn$; the $\star$ marks submatrices that are not
    needed.\;
    
  Compute the singular value decomposition 
    \begin{equation*}
      \BLv^{\trans} \BM \BRp =
        \begin{bmatrix} \BU_{1} & \BU_{2} \end{bmatrix}
        \begin{bmatrix} \BS_1 & \Bzero \\ \Bzero & \BS_{2} \end{bmatrix}
        \begin{bmatrix} \BV_{1}^{\trans} \\[2pt] \BV_{2}^{\trans} \end{bmatrix},
    \end{equation*}
    where $\BU_{1}, \BV_{1} \in \Rnr$ have orthonormal columns and
    $\BS_{1} \in \Rrr$ is diagonal containing the $r$ largest singular values.\;
    
  Using $\BW = \BLv \BU_{1} \BS_{1}^{-1/2}$ and $\BT = \BRp \BV_{1} \BS_{1}^{-1/2}$ compute the reduced-order matrices by projection 
    \begin{subequations} \label{eqn:soBTProj}
    \begin{alignat}{2}
      \BMr & = \BW^{\trans} \BM \BT && = \BI_{r}, \\
      \BDr & = \BW^{\trans}\BD\BT && = \BS_{1}^{-1/2} \BU_{1}^{\trans}
        \left( \BLv^{\trans} \BD \BRp \right) \BV_{1} \BS_{1}^{-1/2}, \\
      \BKr & = \BW^{\trans} \BK \BT && = \BS_{1}^{-1/2} \BU_{1}^{\trans}
        \left( \BLv^{\trans} \BK \BRp \right) \BV_{1} \BS_{1}^{-1/2}, \\
      \BBur & = \BW^{\trans} \BBu && = \BS_{1}^{-1/2} \BU_{1}^{\trans}
        \left( \BLv^{\trans} \BBu \right), \\
      \BCpr & = \BCp \BT && = \left( \BCp \BRp \right)
        \BV_{1} \BS_{1}^{-1/2}, \\
      \BCvr & = \BCv \BT && = \left( \BCv \BRp \right) \BV_{1} \BS_{1}^{-1/2}.
    \end{alignat}
    \end{subequations}\;
    \vspace{-\baselineskip}
\end{algorithm}

\begin{remark} \label{remark:structureIntegrals}
  In the more complex case of frequency-dependent
  damping~\cref{eqn:soSysLaplace}, one can still formulate structured Gramians
  via the integral representations in~\cref{eqn:soGramians}
  by replacing the constant $\BD$ with the frequency-dependent $\BD(s)$.
  Then, if the integrals converge, the structured Gramians can be approximated
  by numerical quadrature rules, and \Cref{alg:pvBT} can be applied;
  see, for example, the work on structure-preserving model reduction of
  integral-differential equations in~\cite{Bre16}.
\end{remark}

\begin{remark} \label{remark:whyPv}
  The reason only \sopvBt{} is considered here is due to the integral
  representations of the second-order system
  Gramians~\cref{eqn:soGramians}.
  These representations are given in terms of the input-to-state and
  state-to-output maps of the second-order system~\cref{eqn:soSys}, and they
  will allow us to facilitate a data-driven reformulation of the method expressed
  in terms of transfer function data~\cref{eqn:soTf}.
  Even though one can also write down integral formulations of $\BPv$ and
  $\BQp$, the structure of these integrals is not as amenable to the tools
  used in \Cref{sec:soQuadBt}.
  In particular, it is not clear what the ``data'' needed in the computations
  correspond to.
\end{remark}


\section{Data-driven second-order balanced truncation}%
\label{sec:soQuadBt}

The \sopvBt{} method in \Cref{alg:pvBT} is intrusive in the sense that it
requires an explicit state-space model of the second-order
system~\cref{eqn:soSys} to compute the Cholesky factors $\BRp, \BLv \in \Rnn$
and to compute the reduced-order model~\cref{eqn:soSysRed} by
projecting the full-order model matrices~\cref{eqn:soBTProj}.
In this section, we present a data-driven reformulation of \sopvBt{}. 
Our only assumption is that we can sample the full-order transfer
function~\cref{eqn:soTf} in a prescribed range of frequencies along the
imaginary axis $\i\R$ as well as the coefficient functions
in~\cref{eqn:dampingModel}, which we assume for now are known a priori.
The resulting method, which we call \emph{second-order quadrature-based
position-velocity balanced truncation} (\soQuadbt{}), is a generalization of
the \foQuadbt{} framework for first-order systems from~\cite{GosGB22}.
Our method allows for the computation of (approximate) \sopvBt{}-based second-order
surrogate models~\cref{eqn:soSysRed} directly from frequency-response data.

We begin with the following observation:
The matrices $\BU_{1}$, $\BY_{1}$ and $\BSigma_{1}$ in~\cref{eqn:soBTProj} stem from the
truncated singular value decomposition of $\BLv^{\trans} \BM \BRp$.
Under the assumption that $\BD(s) = f(s)\BM + g(s)\BK$, it follows that the
intrusive \sopvBt{} reduced-order model constructed via \Cref{alg:pvBT} is
fully characterized by the five quantities
\begin{equation} \label{eqn:soBTKeyQuantities}
  \BLv^{\trans} \BM \BRp, \quad
  \BLv^{\trans} \BK \BRp, \quad
  \BLv^{\trans} \BBu, \quad
  \BCp\BRp \quad\text{and}\quad
  \BCv\BRp.
\end{equation}
Along with the integral formulations of the position-reachability and
velocity-ob\-serv\-abil\-i\-ty Gramians $\BPp\in\Rnn$ and
$\BM^{\trans}\BQv\BM\in\Rnn$ in~\cref{eqn:soGramians}, 
this observation suggests a natural extension of \foQuadbt{} for the
second-order system~\cref{eqn:soSys}.
In~\cite{GosGB22}, the authors arrive at a fully data-driven reformulation of
the first-order \Btr{} method by replacing the exact
square-root factors of $\BR$ and $\BL$ with low-rank factors derived from
numerical quadrature rules applied to the integral representations of the
Gramians~\cref{eqn:foGramians}.
The quadrature-based factors are never explicitly formed, and the method only
makes reference to transfer function evaluations at the underlying
quadrature nodes.
We use this idea as a blueprint to develop a data-driven reformulation
of \sopvBt{}.
Specifically, we use implicit numerical quadrature rules to derive
low-rank approximations of the exact factors $\BRp$ and $\BLv$, and ultimately
show how to realize the quadrature-based approximations of the intrusive
reduced-order quantities in~\cref{eqn:soBTKeyQuantities} non-intrusively using
only transfer function data.

Recall the integral representations of the
Gramians~\cref{eqn:soGramians}, but with the
frequency-dependent $\BD(s)$ in place of $\BD$. 
For compactness of the presentation, we introduce the notation
\begin{equation} \label{eqn:soPencil}
  \Bvarphi \colon \C \to \Cnn \quad\text{with}\quad
  \Bvarphi(s) \defeq s^{2} \BM + s \BD(s) + \BK
\end{equation}
for the resolvent term of the second-order transfer function~\cref{eqn:soTf}.
Consider a numerical quadrature rule defined by the nodes
$\i \zeta_{1}, \ldots, \i \zeta_{J} \in \i\R$ and the corresponding weights
$\varrho_{1}^{2}, \ldots, \varrho_{J}^{2} \in \R$.
Applying this quadrature rule to $\BPp$ in~\cref{eqn:PpIntegral} reveals the
approximate factorization
\begin{equation*}
  \BPp \approx \sum\limits_{j = 1}^{J} \varrho_{j}^{2}
    \Bvarphi(\i \zeta_{j})^{-1} \BBu
    \left( \Bvarphi(\i \zeta_{j})^{-1} \BBu \right)^{\herm}
  = \BRcheckp \BRcheckp^{\herm},
\end{equation*}
where the factor $\BRcheckp \in \C^{n \times mJ}$ is defined as
\begin{equation} \label{eqn:soQuadContFactor}
  \BRcheckp \defeq
    \begin{bmatrix} \varrho_{1} \Bvarphi(\i \zeta_{1})^{-1} \BBu &
    \varrho_{2} \Bvarphi(\i \zeta_{2})^{-1} \BBu & \ldots &
    \varrho_{J} \Bvarphi(\i \zeta_{J})^{-1} \BBu \end{bmatrix}.
\end{equation}
A similar factorization can be derived for $\BQv$ in~\cref{eqn:QvIntegral}.
Given another numerical quadrature rule defined by the nodes
$\i \vartheta_{1}, \ldots, \i \vartheta_{K} \in \i \R$ and the corresponding
weights $\varphi_{1}^{2}, \ldots, \varphi_{K}^{2} \in \R$, applying this quadrature rule then yields
\begin{equation*}
  \BQv\approx \sum\limits_{k = 1}^{K} \varphi_{k}^{2}
    \left( \left(\BCp + \i \vartheta_{k} \BCv \right)
    \Bvarphi(\i \vartheta_{k})^{-1} \right)^{\herm}
    \left( \BCp + \i \vartheta_{k} \BCv \right)
    \Bvarphi(\i \vartheta_{k})^{-1} = \BLcheckv \BLcheckv^{\herm},
\end{equation*}
where the factor $\BLcheckv\in\C^{n \times p K}$ is defined according to
\begin{equation} \label{eqn:soQuadObsvFactor}
  \BLcheckv^{\herm} \defeq
    \begin{bmatrix} \varphi_{1} \left( \BCp +\i \vartheta_{1} \BCv \right)
      \Bvarphi(\i \vartheta_{1})^{-1} \\
      \varphi_{2} \left( \BCp + \i \vartheta_{2} \BCv \right)
      \Bvarphi(\i \vartheta_{2})^{-1}\\[2pt]
      \vdots\\[2pt]
      \varphi_{K} \left( \BCp + \i \vartheta_{K} \BCv \right)
      \Bvarphi(\i \vartheta_{K})^{-1}
    \end{bmatrix}.
\end{equation}
Replacing the exact Cholesky factors $\BRp$ and $\BLv$ in \Cref{alg:pvBT} with
the quadrature-based factors $\BRcheckp$ and $\BLcheckv$
from~\cref{eqn:soQuadContFactor,eqn:soQuadObsvFactor} in forming the
matrices~\cref{eqn:soBTKeyQuantities} already yields a low-rank
implementation of \sopvBt.
As we show next in \Cref{thm:sobtFromData}, the significant implication of this
low-rank formulation is that the quadrature-based approximations to the
intrusive reduced-order quantities in~\cref{eqn:soBTKeyQuantities} can be
computed non-intrusively from transfer function samples and evaluations of
the damping coefficients $f$ and $g$ in~\cref{eqn:dampingModel}.

Before moving on to the results, we introduce the following
position and velocity-output transfer functions
$\posTf \colon \C \to \Cpm$ and $\velTf \colon \C \to \Cpm$ of the
system~\cref{eqn:soSys}, defined as
\begin{equation} \label{eqn:pvTfs}
  \posTf(s) \defeq \BCp \Bvarphi(s)^{-1} \BBu
  \quad\text{and}\quad
  \velTf(s) = s \BCv \Bvarphi(s)^{-1} \BBu.
\end{equation}
The two functions $\posTf$ and $\velTf$ are each the transfer function of a
second-order system~\cref{eqn:soSys} with purely position and velocity outputs,
respectively.
In general, we have that $\BG(s) = \posTf(s) + s \velTf(s)$.
For ease of presentation, we introduce the following notation for indexing the
entries of block matrices:
Given a block matrix $\BX \in \C^{p K \times m J}$, we use  
\begin{equation*}
  \BX_{\Bk, \Bj} \defeq \BX_{(k - 1)p + 1 : k p, (j - 1)m + 1 : j m}
    \in \C^{K \times J},
\end{equation*}
to denote the block submatrix of $\BX$ containing the rows
$(k - 1)p + 1, \ldots, k p$ and the columns $(j - 1)m + 1, \ldots, j m$.
If $p = 1$ (or $m = 1$), we instead write $\BX_{:, \Bj}$ (or $\BX_{\Bk, :}$)
to denote the $j$-th (or $k$-th) block entry of $\BX$.

To simplify the proof of the main result \Cref{thm:sobtFromData}, we recall the well-known resolvent identities
\begin{subequations}
\begin{align} \label{eqn:resolventId1}
(s \BX + \BY)^{-1} \BX (z \BX + \BY)^{-1} & =
  \frac{(z \BX + \BY)^{-1} - (s \BX + \BY)^{-1}}{s - z},\\
\label{eqn:resolventId2}
(s \BX + \BY)^{-1} \BY (z \BX + \BY)^{-1} & =
  \frac{z (z \BX + \BY)^{-1} - s (s \BX + \BY)^{-1}}{z - s}
\end{align}
\end{subequations}
which hold for any matrices $\BX,\BY\in\Cnn$ and $s, z\in\C$ with $s \neq z$ and such that $s\BX + \BY$ and
  $z\BX + \BY$ are nonsingular; see, e.g.,~\cite{DunS88}.

\begin{theorem} \label{thm:sobtFromData} \allowdisplaybreaks
  Define the functions $d \colon \C \to \C$, $n \colon \C \to \C$ and
  $h \colon \C \to \C$ as
  \begin{equation} \label{eqn:auxFunctions}
    d(s) \defeq 1 + s g(s), \quad
    n(s) \defeq s^{2} + s f(s) \quad\text{and}\quad
    h(s) \defeq \frac{n(s)}{d(s)},
  \end{equation}
  where $f$ and $g$ are the damping coefficient functions
  from~\cref{eqn:dampingModel}.
  Suppose the left and right quadrature nodes
  $\i \vartheta_{1}, \ldots, \i \vartheta_{K}$ and
  $\i \zeta_{1}, \ldots, \i \zeta_{J}$
  in~\cref{eqn:soQuadContFactor,eqn:soQuadObsvFactor} are such that
  \begin{equation*}
    \zeta_{j} \neq 0, \quad
    d(\i \vartheta_{k}) \neq 0, \quad
    d(\i \zeta_{j})\neq 0, \quad
    h(\i \vartheta_{k}) \neq h(\i \zeta_{j}), 
  \end{equation*}
  and so that the matrices $\Bvarphi(\i \zeta_{j})$ and
  $\Bvarphi(\i \vartheta_{k})$ are invertible for all
  $j = 1, \ldots, J$ and $k = 1,\ldots, K$, where $\Bvarphi$ is
  defined in~\cref{eqn:soPencil}.
  Let the quadrature-based factors $\BRcheckp$ and $\BLcheckv$ be given as
  in~\cref{eqn:soQuadContFactor,eqn:soQuadObsvFactor}, and define the matrices
  \begin{subequations} \label{eqn:soDataMatrices}
  \begin{alignat}{2}
    \bbLM & \defeq \BLcheckv^{\herm} \BM \BRcheckp \in \C^{pK \times mJ},
      \qquad &
    \bbLK & \defeq \BLcheckv^{\herm} \BK \BRcheckp \in \C^{pK \times mJ},\\
    \bbBu & \defeq \BLcheckv^{\herm} \BBu \in \C^{pK \times m}, 
      \qquad &
    \bbCp & \defeq \BCp \BRcheckp \in \C^{p \times mJ}, \\
    \bbCv & \defeq \BCv \BRcheckp \in \C^{p \times mJ}.
  \end{alignat}
  \end{subequations}
  Then, the matrices~\cref{eqn:soDataMatrices} are given block-entrywise by
  \begin{subequations} \label{eqn:dataFormulas}
  \begin{align} \label{eqn:MfromData}
    \bbLM_{\Bk, \Bj} & = -\frac{\varphi_{k} \varrho_{j}}{d(\i \vartheta_{k})
      d(\i \zeta_{j})} \frac{d(\i \vartheta_{k}) \BG(\i \vartheta_{k}) -
      d(\i \zeta_{j}) \left( \posTf(\i \zeta_{j}) +
      \frac{\vartheta_{k}}{\zeta_{j}}
      \velTf(\i \zeta_{j}) \right)}%
      {h(\i \vartheta_{k}) - h(\i \zeta_{j})},\\[2pt]
    \label{eqn:KfromData}
    \bbLK_{\Bk, \Bj} & = \hphantom{-} \frac{\varphi_{k} \varrho_{j}}%
      {d(\i \vartheta_{k})d(\i \zeta_{j})}
      \frac{n(\i \vartheta_{k}) \BG(\i \vartheta_{k}) - n(\i \zeta_{j})
      \left( \posTf(\i \zeta_{j}) + \frac{\vartheta_{k}}{\zeta_{j}}
      \velTf(\i \zeta_{j}) \right)}%
      {h(\i \vartheta_{k}) - h(\i \zeta_{j})},\\[2pt]
    \label{eqn:BCpCvfromData}
    \left( \bbBu \right)_{\Bk, :} & =\hphantom{-} \varphi_{k}
      \BG(\i \vartheta_{k}), \quad
    \left( \bbCp \right)_{:, \Bj} = \varrho_{j} \posTf(\i \zeta_{j})
    \quad\text{and}\quad
    \left( \bbCv \right)_{:, \Bj} = \frac{\varrho_{j}}{\i \zeta_{j}}
      \velTf(\i \zeta_{j}),
  \end{align}
  \end{subequations}
  where $\posTf$ and $\velTf$ are defined as in~\cref{eqn:pvTfs}.
\end{theorem}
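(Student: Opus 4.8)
The plan is to use the generalized proportional damping hypothesis~\eqref{eqn:dampingModel} to bring the resolvent term $\Bvarphi$ into a form to which \Cref{lmm:resolventIds} applies. Substituting $\BD(s) = f(s)\BM + g(s)\BK$ into~\eqref{eqn:soPencil} and collecting the $\BM$- and $\BK$-terms gives $\Bvarphi(s) = n(s)\BM + d(s)\BK$ with $n$, $d$ as in~\eqref{eqn:auxFunctions}. Since $d(\i\zeta_j) \neq 0$ and $d(\i\vartheta_k) \neq 0$ at every quadrature node, I can factor out $d(s)$ and write $\Bvarphi(s)^{-1} = d(s)^{-1}(h(s)\BM + \BK)^{-1}$ with $h = n/d$. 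This is the key step: the only matrix inversion appearing anywhere, up to the scalar $d(s)$, is of the single-parameter pencil $h(s)\BM + \BK$ with fixed $\BM$ and $\BK$, so \Cref{lmm:resolventIds} applies with $\BX = \BM$, $\BY = \BK$ and the two parameters $h(\i\vartheta_k)$, $h(\i\zeta_j)$, which are distinct by hypothesis.

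With this rewriting, block column $j$ of $\BRcheckp$ from~\eqref{eqn:soQuadContFactor} equals $\tfrac{\varrho_j}{d(\i\zeta_j)}(h(\i\zeta_j)\BM + \BK)^{-1}\BBu$, and block row $k$ of $\BLcheckv^{\herm}$ from~\eqref{eqn:soQuadObsvFactor} equals $\tfrac{\varphi_k}{d(\i\vartheta_k)}(\BCp + \i\vartheta_k\BCv)(h(\i\vartheta_k)\BM + \BK)^{-1}$. The three formulas in~\eqref{eqn:BCpCvfromData} follow at once by undoing the factorization: $(\bbBu)_{\Bk,:}$ is block row $k$ of $\BLcheckv^{\herm}$ times $\BBu$, which is $\varphi_k(\i\vartheta_k\BCv + \BCp)\Bvarphi(\i\vartheta_k)^{-1}\BBu = \varphi_k\BG(\i\vartheta_k)$ by~\eqref{eqn:soTf}; likewise $(\bbCp)_{:,\Bj} = \varrho_j\BCp\Bvarphi(\i\zeta_j)^{-1}\BBu = \varrho_j\posTf(\i\zeta_j)$, and dividing by $\i\zeta_j$ (legitimate since $\zeta_j \neq 0$) gives $(\bbCv)_{:,\Bj} = \varrho_j\BCv\Bvarphi(\i\zeta_j)^{-1}\BBu = \tfrac{\varrho_j}{\i\zeta_j}\velTf(\i\zeta_j)$, both via~\eqref{eqn:pvTfs}.

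For~\eqref{eqn:MfromData}, I would write $\bbLM_{\Bk,\Bj} = (\BLcheckv^{\herm}\BM\BRcheckp)_{\Bk,\Bj}$, insert the two rewritten factors, and apply~\eqref{eqn:resolventId1} to the middle product $(h(\i\vartheta_k)\BM + \BK)^{-1}\BM(h(\i\zeta_j)\BM + \BK)^{-1}$, turning it into a difference of two single resolvents over $h(\i\vartheta_k) - h(\i\zeta_j)$. Reconverting $(h(s)\BM + \BK)^{-1} = d(s)\Bvarphi(s)^{-1}$ and sandwiching between $\BCp + \i\vartheta_k\BCv$ and $\BBu$, the $\vartheta_k$-resolvent collapses to $d(\i\vartheta_k)(\i\vartheta_k\BCv + \BCp)\Bvarphi(\i\vartheta_k)^{-1}\BBu = d(\i\vartheta_k)\BG(\i\vartheta_k)$, while the $\zeta_j$-resolvent becomes $d(\i\zeta_j)\big(\BCp\Bvarphi(\i\zeta_j)^{-1}\BBu + \i\vartheta_k\BCv\Bvarphi(\i\zeta_j)^{-1}\BBu\big) = d(\i\zeta_j)\big(\posTf(\i\zeta_j) + \tfrac{\vartheta_k}{\zeta_j}\velTf(\i\zeta_j)\big)$, using~\eqref{eqn:pvTfs} and again $\zeta_j \neq 0$. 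Pulling out the prefactor $\tfrac{\varphi_k\varrho_j}{d(\i\vartheta_k)d(\i\zeta_j)}$ and a sign yields exactly~\eqref{eqn:MfromData}. The derivation of~\eqref{eqn:KfromData} is identical except that~\eqref{eqn:resolventId2} is applied to $(h(\i\vartheta_k)\BM + \BK)^{-1}\BK(h(\i\zeta_j)\BM + \BK)^{-1}$; the extra scalar factors $h(\i\vartheta_k)$ and $h(\i\zeta_j)$ then combine with the accompanying $d(\cdot)$ via the elementary identity $h(s)d(s) = n(s)$, converting each $d$ into an $n$ and producing~\eqref{eqn:KfromData}.

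I do not expect a genuine obstacle here; the argument is bookkeeping once the factorization $\Bvarphi(s) = d(s)(h(s)\BM + \BK)$ is in hand. The only points demanding care are tracking signs and keeping straight which scalar ($d$, $n$, or $h$) multiplies which resolvent when passing back and forth between the $(h(s)\BM + \BK)^{-1}$ form and $\Bvarphi(s)^{-1}$, and verifying that each of the hypotheses $\zeta_j \neq 0$, $d(\i\zeta_j) \neq 0$, $d(\i\vartheta_k) \neq 0$, and $h(\i\vartheta_k) \neq h(\i\zeta_j)$ is invoked exactly where a division or an application of \Cref{lmm:resolventIds} requires it.
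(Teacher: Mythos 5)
Your proposal is correct and follows essentially the same route as the paper's proof: the factorization $\Bvarphi(s)^{-1} = d(s)^{-1}\left(h(s)\BM + \BK\right)^{-1}$, the direct read-off of $\bbBu$, $\bbCp$, $\bbCv$ from the quadrature factors, and the application of the two resolvent identities of \Cref{lmm:resolventIds} with $\BX = \BM$, $\BY = \BK$ at the points $h(\i\vartheta_{k})$, $h(\i\zeta_{j})$, including the conversion $h(s)d(s) = n(s)$ for the stiffness formula. The hypotheses are invoked exactly where the paper uses them, so no gap remains.
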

\begin{proof} \allowdisplaybreaks
  For any indices $i, \ell \in \N$, we introduce the matrix
  \begin{equation} \label{eqn:grabSubMat}
    \BI_{i, \ell} = \begin{bmatrix} \Be_{(i - 1) \ell + 1} &
      \Be_{(i - 1) \ell + 2} & \cdots & \Be_{i \ell} \end{bmatrix}
      \in \R^{n \times \ell},
  \end{equation}
  where $\Be_{i} \in \Rn$ is the $i$-th canonical basis vector with $1$ at the
  $i$-th position and $0$ everywhere else.
  Note that $\BI_{i, \ell}$ contains a subset of the columns of the
  $n \times n$ identity and has the effect of extracting the columns
  $(i - 1) \ell + 1$ through $i \ell$ of a matrix by right multiplication.
  The formulae for $\bbBu$, $\bbCp$ and $\bbCv$ in~\cref{eqn:BCpCvfromData} are
  a direct consequence of their construction in~\cref{eqn:soDataMatrices} and
  the definitions of the quadrature-based factors $\BRcheckp$ and $\BLcheckv$.
  We easily observe that
  \begin{align*}
    \left( \bbBu \right)_{\Bk, :} = \BI_{k, p}^{\trans} \bbBu
      = \BI_{k, p}^{\trans} \BLcheckv^{\herm} \BBu
      = \varphi_{k} \left( \BCp + \i \vartheta_{k} \BCv \right)
        \Bvarphi(\i \vartheta_{k})^{-1} \BBu
      = \varphi_{k} \BG(\i \vartheta_{k})
  \end{align*}
  holds, where $\BI_{k,p}$ is defined according to~\cref{eqn:grabSubMat}. Similarly, we have that
  \begin{align*}
    \left( \bbCp \right)_{:, \Bj} & = \BCp \BRcheckp \BI_{j, m}
      = \varrho_{j} \BCp \Bvarphi(\i \zeta_{j})^{-1} \BBu
      = \varrho_{j} \posTf(\i \zeta_{j}) \quad \text{and}\\
    \left( \bbCv \right)_{:, \Bj} & = \BCv \BRcheckp \BI_{j, m}
      = \varrho_{j} \BCv \Bvarphi(\i \zeta_{j})^{-1} \BBu
      = \frac{\varrho_{j}}{\i \zeta_{j}} \velTf(\i \zeta_{j}).
  \end{align*}
  For the remaining formulae~\cref{eqn:MfromData,eqn:KfromData}, we first note
  that so long as $d(s) \neq 0$, it holds that
  \begin{align*}
    \Bvarphi(s)^{-1} & = \big( s^{2} \BM + s \BD(s) + \BK \big)^{-1} \\
    & = \Big( \big( s^{2} + s f(s) \big) \BM + \big( 1 + s g(s) \big)
      \BK \Big)^{-1}\\
    & = \frac{1}{1 + s g(s)} \left( \frac{s^{2} + s f(s)}{1 + s g(s)}
      \BM + \BK \right)^{-1}\\
    & = \frac{1}{d(s)} \big( h(s) \BM + \BK \big)^{-1}\\
    & = \frac{1}{d(s)} \Bphi \left( h(s) \right)^{-1},
  \end{align*}
  where $\Bphi(s) \defeq \left( s \BM + \BK \right)$.
  To prove~\cref{eqn:MfromData,eqn:KfromData}, we use the resolvent identities
  \cref{eqn:resolventId1} and~\cref{eqn:resolventId2}.
  Under the assumption that $h(\i \vartheta_{k}) \neq h(\i \zeta_{j})$ for all
  $k$ and $j$, applying~\cref{eqn:resolventId1} reveals that
  \begin{align*}
    \bbLM_{\Bk, \Bj} & = \BI_{k, p}^{\trans} \left( \BLcheckv^{\herm}
      \BM \BRcheckp \right) \BI_{j, m} \\
    & = \varphi_{k} \varrho_{j} \left( \BCp + \i \vartheta_{k} \BCv \right)
      \Bvarphi(\i \vartheta_{k})^{-1} \BM \Bvarphi(\i \zeta_{j})^{-1} \BBu \\
    & = \varphi_{k} \varrho_{j} \left( \BCp + \i \vartheta_{k} \BCv \right)
      \frac{\Bphi \left( h(\i \vartheta_{k}) \right)^{-1} \BM \Bphi
      \left( h(\i \zeta_{j}) \right)^{-1}}%
      {d(\i \vartheta_{k}) d(\i \zeta_{j})} \BBu \\
    & = -\varphi_{k} \varrho_{j} \left( \BCp + \i \vartheta_{k} \BCv \right)
      \frac{\Bphi \left( h(\i \vartheta_{k}) \right)^{-1} -
      \Bphi \left( h(\i \zeta_{j}) \right)^{-1}}%
      {d(\i \vartheta_k) d(\i \zeta_{j}) \left(h(\i \vartheta_{k}) -
      h(\i \zeta_{j}) \right)} \BBu \\
    & = -\frac{\varphi_{k} \varrho_{j}}%
      {d(\i \vartheta_{k}) d(\i \zeta_{j})}
      \frac{d(\i \vartheta_{k}) \BG(\i \vartheta_{k}) -
      d(\i \zeta_{j}) \left( \posTf(\i \zeta_{j}) +
      \frac{\vartheta_{k}}{\zeta_{j}} \velTf(\i \zeta_{j}) \right)}%
      {h(\i \vartheta_{k}) - h(\i \zeta_{j})},
  \end{align*}
  where the last line follows from the relationship
  $\Bvarphi(s)^{-1} = d(s)^{-1} \Bphi \left( h(s) \right)^{-1}$ and the
  definitions of $\posTf$ and $\velTf$ in~\cref{eqn:pvTfs}.
  Likewise, applying the second resolvent identity~\cref{eqn:resolventId2}
  reveals that
  \begin{align*}
    \bbLK_{\Bk, \Bj} & = \BI_{k, p}^{\trans} \left( \BLcheckv^{\herm}
      \BK \BRcheckp \right) \BI_{j, m} \\
    & = \varphi_{k} \varrho_{j} \left( \BCp + \i \vartheta_{k} \BCv \right)
      \Bvarphi(\i \vartheta_{k})^{-1} \BK \Bvarphi(\i \zeta_{j})^{-1} \BBu \\
    & = \varphi_{k} \varrho_{j} \left( \BCp + \i \vartheta_{k} \BCv \right)
      \frac{\Bphi \left( h(\i \vartheta_{k}) \right)^{-1} \BK \Bphi
      \left( h(\i \zeta_{j}) \right)^{-1}}%
      {d(\i \vartheta_{k}) d(\i \zeta_{j})} \BBu \\
    & = \varphi_{k} \varrho_{j} \left( \BCp + \i \vartheta_{k} \BCv \right)
      \frac{h(\i \vartheta_{k}) \Bphi \left( h(\i \vartheta_{k}) \right)^{-1} -
      h(\i \zeta_{j}) \Bphi \left( h(\i \zeta_{j}) \right)^{-1}}%
      {d(\i \vartheta_{k}) d(\i \zeta_{j}) \left( h(\i \vartheta_{k}) -
      h(\i \zeta_{j}) \right)} \BBu \\
    & = \frac{\varphi_{k} \varrho_{j}}%
      {d(\i \vartheta_{k}) d(\i \zeta_{j})}
      \frac{n(\i \vartheta_{k}) \BG(\i \vartheta_{k}) - n(\i \zeta_j)
      \left( \posTf(\i \zeta_{j}) + \frac{\vartheta_{k}}{\zeta_{j}}
      \velTf(\i \zeta_{j}) \right)}%
      {h(\i \vartheta_{k}) - h(\i \zeta_{j})},
  \end{align*}
  thus proving the final remaining formula~\cref{eqn:KfromData}.
\end{proof}

In the formulae~\cref{eqn:MfromData,eqn:KfromData}, the appearing term
$\posTf(\i \zeta_{j}) + (\vartheta_{k} / \zeta_{j}) \velTf(\i \zeta_{j})$
looks contrived at first glance. 
However, this is an artifact of \Cref{thm:sobtFromData} being stated as
generally as possible to include position and velocity outputs simultaneously;
neither is necessarily required.
In realistic applications, it is usually the case that either $\BCp$ or $\BCv$
is identically zero.
Then, the term will resolve to
\begin{equation*}
  \posTf(\i \zeta_{j}) + \frac{\vartheta_{k}}{\zeta_{j}}
    \velTf(\i \zeta_{j}) = \posTf(\i \zeta_{j}) = \BG(\i\zeta_{j})
  \quad\text{if}\quad
  \BCv = \Bzero,
\end{equation*}
which is exactly the second-order transfer function~\cref{eqn:soTf}, or to
\begin{equation*}
  \posTf(\i \zeta_{j}) + \frac{\vartheta_{k}}{\zeta_{j}} \velTf(\i \zeta_{j})
    = \frac{\vartheta_{k}}{\zeta_{j}} \velTf(\i \zeta_{j})
    = \frac{\vartheta_{k}}{\zeta_{j}} \BG(\i \zeta_{j})
  \quad\text{if}\quad
  \BCp = \Bzero,
\end{equation*}
which is a re-scaling of the second-order transfer function~\cref{eqn:soTf}.

Replacing the intrusive quantities~\cref{eqn:soBTKeyQuantities} in
\Cref{alg:pvBT} with the data-based approximations~\cref{eqn:soDataMatrices}
results in a data-driven reformulation of \sopvBt{} summarized in
\Cref{alg:soQuadBT}, which we call \emph{second-order quadrature-based
position-velocity balanced truncation} (\soQuadbt).
Outside of empirical knowledge used to design the damping coefficient functions
$f$ and $g$, the method is entirely non-intrusive.
As in~\cite{GosGB22}, the quadrature-based approximations to the Gramians are
never explicitly formed.
Instead, they are only invoked implicitly to derive the data-based
approximations~\cref{eqn:soDataMatrices}; the points in~\Cref{alg:soQuadBT} at which $\posTf$ and $\velTf$ are sampled are the quadrature nodes underlying these quadrature rules.
While \Cref{alg:soQuadBT} is formulated generally to include position and
velocity outputs, as is the case with \Cref{thm:sobtFromData}, both are not
necessarily required; depending on the application of interest, only one of
them is required.
Lastly, \Cref{thm:sobtFromData} assumes that the sets of quadrature nodes used
to implicitly approximate $\BPp$ and $\BQv$ are \emph{distinct}.
In \Cref{thm:sobtFromData_Hermite}, we derive a special case of the
formulae~\cref{eqn:dataFormulas} when the quadrature nodes
come in complex conjugate pairs, i.e., $\i \zeta_{j} = \i s_{j}$ and
$\i \vartheta_{j} = -\i s_{j}$ for all $j=1, \ldots, J$, and may overlap.

\begin{algorithm}[t!]
  \SetAlgoHangIndent{1pt}
  \DontPrintSemicolon
  \caption{Quadrature-based \sopvBt{} (\soQuadbt{}).} 
  \label{alg:soQuadBT}

  \KwIn{System transfer function~$\tf$,
    position and velocity mappings $\posTf$ and $\velTf$,
    damping coefficients $f$ and $g$,
    left and right quadrature nodes and weights
    $\{(\i \vartheta_{k}, \varphi_{k})\}_{k = 1}^{K}$ and
    $\{(\i \zeta_{j}, \varrho_{j}) \}_{j = 1}^{J}$,
    reduction order $r$.}
  \vspace{2pt}
  \KwOut{Reduced-order system matrices $\BMr, \BDr, \BKr, \BBur, \BCpr, \BCvr$
    in~\cref{eqn:soSysRed}.}

  Evaluate the mappings at the quadrature nodes to obtain the data
    \begin{align*}
      & \left\{ \Big( \posTf(\i \zeta_{j}), \velTf(\i \zeta_{j}),
        f(\i \zeta_{j}), g(\i \zeta_{j}) \Big) \right\}_{j = 1}^{J}
        \quad\text{and} \\
      & \left\{ \Big( \tf(\vartheta_{k}), f(\i \vartheta_k),
        g(\i \vartheta_{k}) \Big) \right\}_{k = 1}^{K},
    \end{align*}
    and construct the data matrices $\bbLM, \bbLK, \bbBu, \bbCp, \bbCv$
    according to \Cref{thm:sobtFromData}.\;
  
  Compute the singular value decomposition
    \begin{equation*}
      \bbLM = \begin{bmatrix} \BUcheck_{1} & \BUcheck_{2}\end{bmatrix}
        \begin{bmatrix} \BSigmacheck_{1} & \Bzero \\ \Bzero & \BSigmacheck_{2}
        \end{bmatrix}
        \begin{bmatrix} \BYcheck_{1}^{\herm} \\[2pt] \BYcheck_{2}^{\herm}
        \end{bmatrix},
    \end{equation*}
    for $\BSigmacheck_{1} \in \Rrr$ diagonal containing the $r$ largest
    singular values, $\BSigmacheck_{2} \in \R^{(pK - r) \times (mJ - r)}$
    diagonal and $\BUcheck_{1}, \BUcheck_{2}, \BYcheck_{1}, \BYcheck_{2}$
    partitioned accordingly.\;
    
  Compute the reduced-order model matrices according to
    \begin{align*}
      \BMr & = \BI_{r}, \\
      \BKr & = \BSigmacheck_1^{-1/2} \BUcheck_{1}^{\herm} \bbLK
        \BYcheck_{1} \BSigmacheck_{1}^{-1/2}, \\
      \BDr(s) & = f(s) \BI_{r} + g(s) \BKr, \\
      \BBur & = \BSigmacheck_{1}^{-1/2} \BUcheck_{1}^{\herm} \bbBu, \\
      \BCpr & = \bbCp \BYcheck_{1} \BSigmacheck_{1}^{-1/2}, \\
      \BCvr & = \bbCv \BYcheck_{1} \BSigmacheck_{1}^{-1/2}.
    \end{align*}
\end{algorithm}

\begin{remark} \label{remark:soLoewner}
  It is worth highlighting that the matrices in~\cref{eqn:soDataMatrices} are
  highly similar to the second-order Loewner and shifted Loewner matrices that
  appear in the extension of the interpolatory Loewner framework for
  second-order systems developed in~\cite{PonGB22}. 
  Suppose that $\BCv = \Bzero$ and that the system~\cref{eqn:soSys} has
  Rayleigh damping~\cref{eqn:Rayleigh}.
  Given two disjoint sets of interpolation points
  $\lambda_{1}, \ldots, \lambda_{K} \in \C$ and
  $\mu_{1}, \ldots, \mu_{J} \in \C$, the second-order Loewner and
  shifted Loewner matrices $\bbL \in \C^{pK \times mJ}$ and
  $\bbLs \in \C^{pK \times mJ}$ are defined block-elementwise as
  \begin{equation} \label{eqn:soLoewner}
    \bbL_{\Bk, \Bj} = \frac{d(\lambda_{k}) \BG(\lambda_{k}) -
      d(\mu_{j}) \BG(\mu_{j})}{h(\lambda_{k}) - h(\mu_{j})}, \quad
    \bbLs_{\Bk, \Bj} = \frac{n(\lambda_{k}) \BG(\lambda_{k}) -
      n(\mu_{j}) \BG(\mu_{j})}{h(\lambda_{k}) - h(\mu_{j})},
  \end{equation}
  where $n$, $d$ and $h$ are defined as in~\cref{eqn:auxFunctions} for
  $f(s) = \alpha$ and $g(s) = \beta$; see~\cite[Def.~2]{PonGB22}.
  If $\BCp = \Bzero$, and we take $\mu_{j} = \i \zeta_{j}$ and
  $\lambda_{k} = \i \vartheta_{k}$, then the
  matrices~\cref{eqn:soDataMatrices}, which appear in \soQuadbt{}, are related
  to the second-order Loewner and shifted Loewner matrices~\cref{eqn:soLoewner}
  by diagonal scalings:
  \begin{align*}
    \bbLM & = -\diag \left( \varphi_{1} d(\i \vartheta_{1})^{-1}, \ldots,
      \varphi_{K} d(\i \vartheta_{K})^{-1} \right) \bbL
      \diag \left( \varrho_{1} d(\i \zeta_{1})^{-1}, \ldots,
      \varrho_{J} d(\i \zeta_{J})^{-1} \right),\\
    \bbLK & = \hphantom{-} \diag \left( \varphi_{1} d(\i \vartheta_{1})^{-1},
      \ldots, \varphi_{K} d(\i \vartheta_{K})^{-1} \right) \bbLs
      \diag \left( \varrho_{1} d(\i \zeta_{1})^{-1}, \ldots,
      \varrho_{J} d(\i \zeta_{J})^{-1} \right).
  \end{align*}
  In~\cite{PonGB22}, the matrices~\cref{eqn:soLoewner} are used to construct
  rational interpolants of second-order systems from data;
  see~\cite[Thm.~3]{PonGB22} for further details.
  The input and output matrices of this rational interpolant are those
  in~\cref{eqn:BCpCvfromData} multiplied from the left and right, respectively,
  by the diagonal scalings given above.
  We also highlight that the derivation of the diagonally scaled Loewner
  matrices in the proof of \Cref{thm:sobtFromData} is different from that
  of~\cite[Thm.~3]{PonGB22}, which uses a parameterization of interpolating
  systems~\cref{eqn:soSysRed} based on matrix equations.
  The tools and techniques used in the proof of \Cref{thm:sobtFromData} can be
  applied to derive this generalization of the Loewner framework
  from~\cite{PonGB22}, as well.
  Significantly, this extends the applicability of the framework to
  second-order systems~\cref{eqn:soSys} with the generalized proportional
  damping model~\cref{eqn:dampingModel}, as well as mixed position and
  velocity outputs.
\end{remark}


\section{Theoretical considerations}%
\label{sec:theorConsiderations}

In this section, we consider theoretical implications of the
\soQuadbt{} framework including the preservation of asymptotic stability and
the error between the data-driven terms and their intrusive counterparts.


\subsection{Stability preservation}%
\label{sec:stability}

For the present discussion, we allow the matrices in~\cref{eqn:soSys} to be
complex valued.
In the intrusive setting, \sopvBt{} preserves asymptotic stability when the
underlying full-order model~\cref{eqn:soSys} is \emph{Hermitian}, i.e.,
\begin{equation} \label{eqn:soSymmReal}
  \BM = \BM^{\herm}, \quad
  \BD = \BD^{\herm}, \quad
  \BK = \BK^{\herm}, \quad
  \BBu = \BCp^{\herm} \quad\text{and}\quad
  \BCv = \Bzero,
\end{equation}
and the system matrices $\BM, \BD, \BK$ are Hermitian positive definite
(\HPD{}).
Obviously, a special case of~\cref{eqn:soSymmReal} is the
system~\cref{eqn:soSys} being state-space symmetric when the corresponding
matrices are real-valued.
Interestingly, \soQuadbt{} will preserve asymptotic stability if the data are
generated by a Rayleigh-damped system that is Hermitian with \HPD{} mass,
damping and stiffness matrices, and if the left and right quadrature
nodes---those used to implicitly approximate $\BQv$ and $\BPp$,
respectively---are complex conjugate pairs of each other, i.e.,
$\i \zeta_{i} = \i s_{i}$ and $\i \vartheta_{i} = -\i s_{i}$.
If the $s_{i}$'s are not all strictly positive or negative, then at least some
of the nodes will be equivalent, i.e.,
$\zeta_{j} = s_{j} = - s_{k} = \vartheta_{k}$ for some $j, k$, which is not
allowed under~\Cref{thm:sobtFromData}.
To prove the stability result, we first derive formulae for the data
matrices~\cref{eqn:soDataMatrices} in this special case when the quadrature
nodes underlying the approximations to $\BQv$ and $\BPp$ come in complex
conjugate pairs and may overlap, with $\BCv = \Bzero$. 

\begin{theorem} \label{thm:sobtFromData_Hermite} \allowdisplaybreaks
  Let the functions $d \colon \C \to \C$, $n \colon \C \to \C$ and
  $h \colon \C \to \C$ be defined as in~\cref{eqn:auxFunctions}.
  Suppose the left and right quadrature rules
  in~\cref{eqn:soQuadContFactor,eqn:soQuadObsvFactor} are complex conjugate
  pairs, i.e., $\zeta_{i} = s_{i}$, $\vartheta_{i} = -s_{i}$ and
  $\xi_{i} = \varrho_{i} = \vartheta_{i}$ for $i = 1, \ldots, N$ and
  $N = J = K$, and the quadrature rules are such that $d(\i s_i)\neq 0$ and $\Bvarphi(\i s_{j})$ is nonsingular for all $i = 1, \ldots, N$.
  Let the quadrature-based factors $\BRcheckp$ and $\BLcheckv$ be given as
  in~\cref{eqn:soQuadContFactor,eqn:soQuadObsvFactor} and let the matrices
  $\bbLM, \bbLK \in \C^{pK \times mJ}$, $\bbBu\in\C^{pK \times m}$ and $\bbCp\in\C^{p \times mJ}$ be as in~\cref{eqn:soDataMatrices}.
  Then, the matrices $\bbLM$, $\bbLK$, $\bbBu$ and $\bbCp$ are given
  block-entrywise by
  \begin{subequations} \label{eqn:dataFormulas_Hermite}
  \begin{align}
    \label{eqn:MfromData_Hermite}
    \bbLM_{\Bk, \Bj} & =
      \begin{cases}
        \displaystyle -\frac{\xi_{k} \xi_{j}}{d(\i s_{j})^2}
          \frac{\ds}{\ds s} \Big( d(s) \BG(s) \Big) \bigg|_{s = \i s_{j}}
          \left( \frac{\ds}{\ds s} h(s) \bigg|_{s = \i s_{j}} \right)^{-1} &
          \text{if} \quad \i s_{j} = -\i s_{k}, \\[5mm]
        \displaystyle -\frac{\xi_{k} \xi_{j}}{d(-\i s_{k}) d(\i s_{j})}
          \frac{d(-\i s_{k})\BG(-\i s_{k}) - d(\i s_{j}) \BG(\i s_{j})}%
          {h(-\i s_{k}) - h(\i s_{j})} & \text{if} \quad \i s_{j} \neq -\i s_{k},
      \end{cases}\\[2ex]
    \label{eqn:KfromData_Hermite}
    \bbLK_{\Bk, \Bj} & =
      \begin{cases}
        \displaystyle \frac{\xi_{k} \xi_{j}}{d(\i s_{j})^2}
          \frac{\ds}{\ds s} \Big( n(s) \BG(s) \Big) \bigg|_{s = \i s_{j}} 
          \left( \frac{\ds}{\ds s} h(s) \bigg|_{s = \i s_{j}} \right)^{-1} &
          \text{if} \quad \i s_{j} = -\i s_{k}, \\[5mm]
    \displaystyle \frac{\xi_{k} \xi_{j}}{d(-\i s_{k}) d(\i s_{j})}
      \frac{n(-\i s_{k}) \BG(-\i s_{k}) - n(\i s_{j}) \BG(\i s_{j})}%
      {h(-\i s_{k}) - h(\i s_{j})} & \text{if} \quad \i s_{j} \neq -\i s_{k},
      \end{cases}\\[2ex]
    \label{eqn:BCpCvfromData_Hermite}
    \big( \bbBu & \big)_{\Bk, :} = \xi_k \BG(-\i s_{k})
    \quad\text{and}\quad
    \big( \bbCp \big)_{:, \Bj} = \xi_j \BG(\i s_{j}),
  \end{align}
  \end{subequations}
  where $\BG$ is the second-order transfer function~\cref{eqn:soTf}.
\end{theorem}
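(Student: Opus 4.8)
The plan is to partition the $(k,j)$ block-index pairs into two regimes---those with distinct quadrature nodes, $\i s_j \neq -\i s_k$, and the ``confluent'' ones where the left and right nodes coincide, $\i s_j = -\i s_k$---handling the first regime by directly specializing \Cref{thm:sobtFromData} and the second by a Hermite-type limiting argument built on the resolvent factorization from that theorem's proof.

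First I would record the simplifications in force: here $\BCv = \Bzero$ (as in the discussion preceding the statement), so $\velTf \equiv \Bzero$ and $\posTf = \BG$; and the nodes and weights are $\zeta_j = s_j$, $\vartheta_k = -s_k$, $\varrho_j = \varphi_j = \eta_j$. For a pair with $\i s_j \neq -\i s_k$ the left node $-\i s_k$ and the right node $\i s_j$ are distinct, and the remaining hypotheses of \Cref{thm:sobtFromData} carry over to that block (in the $\BCv = \Bzero$ case the condition $\zeta_j \neq 0$ plays no role, and the chosen nodes are $h$-separated, e.g.\ for Rayleigh damping); substituting the identifications above into \cref{eqn:MfromData,eqn:KfromData} and using $\velTf \equiv \Bzero$, $\posTf = \BG$ collapses the bracketed term $\posTf(\i\zeta_j) + (\vartheta_k/\zeta_j)\velTf(\i\zeta_j)$ to $\BG(\i s_j)$ and reproduces exactly the $\i s_j \neq -\i s_k$ branches of \cref{eqn:MfromData_Hermite,eqn:KfromData_Hermite}. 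The same substitution in \cref{eqn:BCpCvfromData} gives \cref{eqn:BCpCvfromData_Hermite}; since these last formulas involve neither $h$ nor the vanishing denominator, they hold for every block irrespective of overlap.

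The substantive case is $\i s_j = -\i s_k$, for which \Cref{thm:sobtFromData} does not apply because $h(\i\vartheta_k) = h(\i\zeta_j)$. Here I would return to the defining expressions in \cref{eqn:soDataMatrices} with the factors from \cref{eqn:soQuadContFactor,eqn:soQuadObsvFactor}, which (using $\BCv = \Bzero$ and $\vartheta_k = \zeta_j = s_j$) reduce to $\bbLM_{\Bk,\Bj} = \eta_k \eta_j\, \BCp\, \Bvarphi(\i s_j)^{-1} \BM\, \Bvarphi(\i s_j)^{-1} \BBu$ and the analogue with $\BK$ replacing $\BM$. I would then insert the factorization $\Bvarphi(s)^{-1} = d(s)^{-1}\Bphi(h(s))^{-1}$, $\Bphi(w) = w\BM + \BK$, from the proof of \Cref{thm:sobtFromData}, combine it with the elementary identities $\tfrac{\ds}{\ds w}\Bphi(w)^{-1} = -\Bphi(w)^{-1}\BM\Bphi(w)^{-1}$ and $\Bphi(w)^{-1}\BK\Bphi(w)^{-1} = \tfrac{\ds}{\ds w}(w\,\Bphi(w)^{-1})$ (the latter from $\BK = \Bphi(w) - w\BM$), and differentiate the scalar-times-resolvent relations $d(s)\BG(s) = \BCp\Bphi(h(s))^{-1}\BBu$ and $n(s)\BG(s) = h(s)\,\BCp\Bphi(h(s))^{-1}\BBu$ via the chain rule. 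This produces $\BCp\Bphi(h(s))^{-1}\BM\Bphi(h(s))^{-1}\BBu = -h'(s)^{-1}\tfrac{\ds}{\ds s}(d(s)\BG(s))$ and $\BCp\Bphi(h(s))^{-1}\BK\Bphi(h(s))^{-1}\BBu = h'(s)^{-1}\tfrac{\ds}{\ds s}(n(s)\BG(s))$, the second also using the product rule $\tfrac{\ds}{\ds s}(n(s)\BG(s)) = h'(s)d(s)\BG(s) + h(s)\tfrac{\ds}{\ds s}(d(s)\BG(s))$ that follows from $n = h\,d$. Evaluating at $s = \i s_j$ and dividing by $d(\i s_j)^2$ then yields the $\i s_j = -\i s_k$ branches of \cref{eqn:MfromData_Hermite,eqn:KfromData_Hermite}. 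As a cross-check, these same branches arise by sending $-\i s_k \to \i s_j$ in the off-diagonal formula: numerator and denominator vanish together and l'H\^{o}pital's rule recovers the derivative quotient, which is exactly the confluence underlying \Cref{thm:sobtFromData}.

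The main difficulty I expect is bookkeeping rather than conceptual: keeping straight which of the two ``squared-resolvent'' identities applies ($\BM$ versus $\BK$) and carrying the chain-rule factor $h'(s)$ through correctly, since a stray sign or a dropped $h'(s)$ would break the match with \cref{eqn:dataFormulas_Hermite}. A secondary point to verify is that $h'(\i s_j) \neq 0$, so the confluent formulas are well defined; I would note that this non-degeneracy is precisely what makes the limit exist and is implicit in the quadrature-node assumptions, playing here the role that $h(\i\vartheta_k) \neq h(\i\zeta_j)$ plays in \Cref{thm:sobtFromData}.
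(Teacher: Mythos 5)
Your proposal is correct and follows essentially the same route as the paper: the off-diagonal blocks and the $\bbBu$, $\bbCp$ formulas are obtained by specializing \Cref{thm:sobtFromData} with $\BCv = \Bzero$, and the confluent blocks are reduced, via the factorization $\Bvarphi(s)^{-1} = d(s)^{-1}\Bphi(h(s))^{-1}$, to the derivative of $d(s)\BG(s)$ (resp.\ $n(s)\BG(s)$) divided by $h'(\i s_j)$. The only difference is cosmetic: the paper reaches the derivative through a limit of the divided-difference (Loewner) quotient, i.e.\ exactly the l'H\^{o}pital viewpoint you mention as a cross-check, whereas you differentiate the resolvent directly by the chain rule; your added remark that $h'(\i s_j)\neq 0$ is needed for the confluent formula to be well posed is a fair observation that the paper leaves implicit.
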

\begin{proof}  \allowdisplaybreaks
  The claims in~\cref{eqn:BCpCvfromData_Hermite}, as well
  as~\cref{eqn:MfromData_Hermite,eqn:KfromData_Hermite} for the case of
  $\i s_{j} \neq -\i s_{k}$ follow directly from \Cref{thm:sobtFromData} under the stated
  assumptions since $\zeta_{j} = s_{j}$ and $\vartheta_{k} = -s_{k}$.
  To prove~\cref{eqn:MfromData_Hermite} in the $\i s_{j} = -\i s_{k}$ case, we observe that
  \begin{equation*}
    \bbLM_{\Bk, \Bj} = \BI_{k, p}^{\trans} \left( \BLcheckv^{\herm}
      \BM \BRcheckp \right) \BI_{j, m} =
      \xi_{k} \xi_{j} \BCp \Bvarphi(\i s_{j})^{-1} \BM \Bvarphi(\i s_{j})^{-1} \BBu
  \end{equation*}
  holds.
  Let $\varepsilon > 0$; using the same logic as in the proof of
  \Cref{thm:sobtFromData}, we have that
  \begin{align*}
    & \BCp \Bvarphi(\i s_{j})^{-1} \BM \Bvarphi(\i s_{j})^{-1} \BBu \\
    & = \lim\limits_{\varepsilon \to 0} \BCp
      \Bvarphi \big( \i (s_{j} + \varepsilon) \big)^{-1} \BM
      \Bvarphi(\i s_{j})^{-1} \\
    & = -\lim\limits_{\varepsilon \to 0}
      \frac{1}{d \big(\i (s_{j} + \varepsilon) \big) d(\i s_{j})}
      \frac{d \big( \i(s_{j} + \varepsilon) \big) \BG
      \big( \i (s_{j} + \varepsilon) \big) - d(\i s_{j}) \BG(\i s_{j})}%
      {h \big(\i (s_{j} + \varepsilon) \big) - h(\i s_{j})} \\
    & = -\frac{1}{d(\i s_{j})^{2}} \lim\limits_{\varepsilon \to 0}
      \frac{d \big( \i(s_{j} + \varepsilon) \big) \BG
      \big(\i (s_{j} + \varepsilon) \big) - d(\i s_{j}) \BG(\i s_{j})}%
      {\i (s_{j} + \varepsilon) - \i s_{j}}
      \frac{\i (s_{j} + \varepsilon) - \i s_{j}}%
      {h \big(\i (s_{j} +\varepsilon) \big) - h(\i s_{j})}\\
    & = -\frac{1}{d(\i s_{j})^{2}} \frac{\ds}{\ds s}
      \Big( d(s) \BG(s) \Big) \bigg|_{s = \i s_{j}}
      \left( \frac{\ds}{\ds s} h(s) \bigg|_{s = \i s_{j}} \right)^{-1},
  \end{align*}
  by the limit definition of the derivative.
  This proves~\cref{eqn:MfromData_Hermite} when $\i s_{j} = -\i s_{k}$ by
  multiplication with $\xi_{k} \xi_{j}$.
  Using the same arguments, the other remaining
  formula~\cref{eqn:KfromData_Hermite} can be shown for $\i s_{j} = -\i s_{k}$, which
  concludes the proof.
\end{proof}

\Cref{alg:soQuadBT} can be used with the setup and assumptions outlined in
\Cref{thm:sobtFromData_Hermite} to compute reduced-order models via \soQuadbt{}.
The substantial difference to using \Cref{thm:sobtFromData} is that
the data matrices in~\cref{eqn:soDataMatrices} are constructed according to the
formulae in~\cref{eqn:dataFormulas_Hermite} instead of~\cref{eqn:dataFormulas}. 
In this case, the matrices in~\cref{eqn:soDataMatrices} may require information
about the derivative of $\BG$ in the off-diagonal entries if the chosen
quadrature nodes are such that $\i s_{j} = -\i s_{k}$.
This will indeed be the case for the realification procedure that we describe
in \Cref{sec:realmodel}.

Suppose the assumptions of \Cref{thm:sobtFromData_Hermite} are satisfied and
that the transfer function data $\BG(\i s_{j})$ are generated by a Hermitian
system~\cref{eqn:soSymmReal} with \HPD{} mass, damping and stiffness matrices. 
Because the underlying system is Hermitian, it follows that the
quadrature-based factors~\cref{eqn:soQuadContFactor,eqn:soQuadObsvFactor}
satisfy $\BRcheckp = \BLcheckv$ since $\i \zeta_{j} = \i s_{j}$ and
$\i \vartheta_{k} = -\i s_{k}$.
Thus, the matrices $\bbLM$ and $\bbLK$ constructed according to
\Cref{thm:sobtFromData_Hermite} satisfy
\begin{equation*}
  \bbLM = \BRcheckp^{\herm}\BM\BRcheckp = \bbLM^{\herm} \quad\text{and}\quad
  \bbLK = \BRcheckp^{\herm}\BK\BRcheckp = \bbLK^{\herm}
\end{equation*}
by the property~\cref{eqn:soSymmReal} of the underlying system.
Moreover, $\bbBu = \bbCp^{\herm}$ holds by construction.
We claim that a reduced-order model~\cref{eqn:soSysRed} computed via
\Cref{alg:soQuadBT} using these Hermitian data matrices has a similar symmetry
structure as the original underlying system with \HPD{} mass, damping and
stiffness matrices.
Obviously $\BMr = \BI_r$ is \HPD{} by construction.
Because $\bbLM$ is Hermitian, its singular value decomposition is such that the
singular vectors satisfy $\BUcheck = \BYcheck$. 
Then, for any $\Bz\in\C^{Nm}$, it holds that
\begin{equation*}
  \Bz^{\herm} \BKr \Bz = \Bz^{\herm}\BSigmacheck_{1}^{-1/2} \BUcheck_{1}^{\herm}
    \bbLK \BUcheck_{1} \BSigmacheck_{1}^{-1/2} \Bz =
    \Bw^{\herm} \BK \Bw > 0,
\end{equation*}
with $\Bw = \BRcheckp \BUcheck_{1} \BSigmacheck_{1}^{-1/2} \Bz$, and the fact that
$\Bw^{\herm} \BK \Bw > 0$ follows from the assumption that $\BK$ is \HPD{}.
Therefore, we have that $\BKr$ is \HPD{}.
Finally, if the system is Rayleigh damped, then trivially $\BDr$ is \HPD{}
because it is a nonnegative linear combination of two \HPD{} matrices and
$\alpha, \beta \geq 0$ by assumption.
Because Hermitian second-order systems~\cref{eqn:soSys} with positive definite
mass, damping, and stiffness matrices are asymptotically stable,
this argument directly implies the following stability result.

\begin{theorem} \label{thm:stability}
  Assume the system $\Sys$ in~\cref{eqn:soSys} used to generate the data
  $\BG(\i s_j)$ is Hermitian~\cref{eqn:soSymmReal} with positive definite mass,
  damping and stiffness matrices, assume that $\Sys$ has Rayleigh
  damping~\cref{eqn:Rayleigh}, and assume that the data $\BG(\i s_j)$
  satisfy the assumptions of \Cref{thm:sobtFromData_Hermite}.
  Then, the reduced-order system computed by \soQuadbt{} using the data
  matrices from \Cref{thm:sobtFromData_Hermite} is asymptotically stable. 
\end{theorem}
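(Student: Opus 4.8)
The statement is essentially the formalization of the discussion preceding it, so the plan is to organize that discussion into a clean three-stage argument. Stage one establishes that under the hypotheses the two quadrature factors coincide, $\BRcheckp = \BLcheckv$. Stage two uses this to show that the reduced matrices produced by \Cref{alg:soQuadBT} from the data of \Cref{thm:sobtFromData_Hermite} are Hermitian positive definite. Stage three invokes the classical fact that a second-order system with Hermitian positive-definite mass, damping, and stiffness matrices is asymptotically stable.

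For stage one: since $\Sys$ has Rayleigh damping~\cref{eqn:Rayleigh}, the resolvent term~\cref{eqn:soPencil} is $\Bvarphi(s) = (s^{2} + \alpha s)\BM + (\beta s + 1)\BK$. Using $\BM = \BM^{\herm}$, $\BK = \BK^{\herm}$ from~\cref{eqn:soSymmReal} and $\alpha,\beta\in\R$, one gets $\Bvarphi(\bar{s})^{\herm} = \Bvarphi(s)$, hence $\Bvarphi(s)^{-\herm} = \Bvarphi(\bar{s})^{-1}$. Evaluating at $s = \i s_{j}$ with real $s_{j}$ gives $\Bvarphi(\i\vartheta_{j})^{-1} = \Bvarphi(-\i s_{j})^{-1} = \Bvarphi(\i s_{j})^{-\herm}$. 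Taking the Hermitian conjugate of the $j$-th block row of the factor~\cref{eqn:soQuadObsvFactor} and inserting $\BCv = \Bzero$, $\BBu = \BCp^{\herm}$ together with the matching left and right quadrature weights then shows that the $j$-th block column of $\BLcheckv$ equals the $j$-th block column of $\BRcheckp$ in~\cref{eqn:soQuadContFactor}; thus $\BRcheckp = \BLcheckv$.

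For stage two: it follows that $\bbLM = \BRcheckp^{\herm}\BM\BRcheckp$ and $\bbLK = \BRcheckp^{\herm}\BK\BRcheckp$ are Hermitian positive semidefinite (using that $\BM$, $\BK$ are \HPD{}), and $\bbBu = \BRcheckp^{\herm}\BCp^{\herm} = \bbCp^{\herm}$. Since $\bbLM$ is Hermitian positive semidefinite, its singular value decomposition coincides with its spectral decomposition, so the left and right singular matrices agree, $\BUcheck = \BYcheck$, and the truncated factor satisfies $\bbLM\BUcheck_{1} = \BUcheck_{1}\BSigmacheck_{1}$ with $\BSigmacheck_{1}$ positive definite (it collects the $r$ largest, hence nonzero, singular values). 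Substituting $\BYcheck_{1} = \BUcheck_{1}$ into the formulas of \Cref{alg:soQuadBT} yields $\BMr = \BI_{r}$, which is trivially \HPD{}, and, for any $\Bz\neq\Bzero$, $\Bz^{\herm}\BKr\Bz = \Bw^{\herm}\BK\Bw$ with $\Bw = \BRcheckp\BUcheck_{1}\BSigmacheck_{1}^{-1/2}\Bz$; if $\Bw = \Bzero$ then $\BRcheckp^{\herm}\BM\Bw = \bbLM\BUcheck_{1}\BSigmacheck_{1}^{-1/2}\Bz = \BUcheck_{1}\BSigmacheck_{1}^{1/2}\Bz = \Bzero$, forcing $\Bz = \Bzero$, so $\Bw\neq\Bzero$ and $\Bz^{\herm}\BKr\Bz > 0$ because $\BK$ is \HPD{}; hence $\BKr$ is \HPD{}. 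Finally, $\BDr = \alpha\BI_{r} + \beta\BKr$ is a nonnegative combination of \HPD{} matrices with at least one positive coefficient (since $\BD = \alpha\BM + \beta\BK$ is \HPD{}), so $\BDr$ is \HPD{}.

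For stage three: if $(\lambda,\Bv)$ with $\Bv\neq\Bzero$ satisfies $(\lambda^{2}\BMr + \lambda\BDr + \BKr)\Bv = \Bzero$, multiplying on the left by $\Bv^{\herm}$ and on the right by $\Bv$ gives $a\lambda^{2} + b\lambda + c = 0$ with $a = \Bv^{\herm}\BMr\Bv$, $b = \Bv^{\herm}\BDr\Bv$, $c = \Bv^{\herm}\BKr\Bv$ all real and positive; the two roots of such a quadratic have strictly negative real part, so the reduced system is asymptotically stable. I expect the only genuinely delicate point to be the injectivity of the square-root reduction map $\BRcheckp\BUcheck_{1}\BSigmacheck_{1}^{-1/2}$ in stage two---equivalently, that discarding the small singular values of $\bbLM$ does not destroy positive definiteness---which is precisely where the spectral structure of the Hermitian matrix $\bbLM$ is used; everything else is bookkeeping with the Hermitian symmetry and the matching-node assumption.
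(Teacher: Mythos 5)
Your proof is correct and follows essentially the same route as the paper: establish $\BRcheckp = \BLcheckv$ from the Hermitian structure and the conjugate nodes, deduce that $\BMr$, $\BDr$, $\BKr$ are Hermitian positive definite via the SVD of the Hermitian (indeed positive semidefinite) matrix $\bbLM$, and conclude with the classical stability fact for second-order systems with \HPD{} coefficient matrices. The only difference is that you spell out details the paper leaves implicit, notably that $\Bw = \BRcheckp\BUcheck_{1}\BSigmacheck_{1}^{-1/2}\Bz \neq \Bzero$ whenever $\Bz \neq \Bzero$ (so that $\Bz^{\herm}\BKr\Bz > 0$ really holds) and the quadratic-eigenvalue argument for the final stability claim.
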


\Cref{thm:stability} may also be used conversely to check the assumptions
placed on the underlying system that provides the data.
If a reduced-order model~\cref{eqn:soSysRed} is computed by \Cref{alg:soQuadBT}
using the data matrices~\cref{eqn:dataFormulas_Hermite} is not asymptotically
stable, then the transfer function data $\BG(\i s_{j})$ were not generated by a
symmetric system with \HPD{} mass, damping and stiffness matrices.


\subsection{Error control}%
\label{sec:error}

The result~\cite[Prop.~3.2]{GosGB22} provides error control on the data
matrices that appear in \foQuadbt{} in terms of their intrusive counterparts. 
By recognizing that the proof of~\cite[Prop.~3.2]{GosGB22} makes no assumption
on the underlying system structure, this result can be extended directly to our
setting to derive error bounds on the matrices in \Cref{thm:sobtFromData}.

\begin{theorem} \label{thm:error} \allowdisplaybreaks
  Assume that the quadrature rules used to
  derive~\cref{eqn:soQuadContFactor,eqn:soQuadObsvFactor} are such that the
  quadrature-based approximations to $\BPp$ and $\BQv$ satisfy
  \begin{equation} \label{eqn:quadControl}
    \lVert \BPp - \BRcheckp \BRcheckp^{\herm} \rVert_{\frob} \leq
      \frac{\delta}{1 + \delta} \sigma_{\min} \left( \BPp \right)
    \quad\text{and}\quad
    \lVert \BQv - \BLcheckv \BLcheckv^{\herm} \rVert_{\frob} \leq
      \frac{\delta}{1 + \delta} \sigma_{\min} \left( \BQv \right),
  \end{equation}
  for some $\delta \in (0, 1)$, where $\sigma_{\min}(\cdot)$ denotes the minimum
  singular value of a matrix.
  Then, there exist partial isometries $\leftIso \in \C^{pK \times n}$ and
  $\rightIso \in \C^{mJ \times n}$ so that
  \begin{subequations} \label{eqn:romBounds}
  \begin{align}
    \lVert \BLcheckv^{\herm} \BM \BRcheckp - \leftIso \big( \BLv^{\herm}
      \BM \BRp \big) \rightIso^{\herm} \rVert_{\frob} & \leq 2 \delta
      \lVert \BLv \rVert_{2} \lVert \BM \rVert_{2} \lVert \BRp \rVert_{2}, \\
    \lVert \BLcheckv^{\herm} \BK \BRcheckp - \leftIso \big( \BLv^{\herm}
      \BK \BRp \big) \rightIso^{\herm} \rVert_{\frob} & \leq 2 \delta
      \lVert \BLv \rVert_{2} \lVert \BK \rVert_{2} \lVert \BRp \rVert_{2}, \\
    \lVert \BLcheckv^{\herm} \BB - \leftIso \big( \BLv^{\herm} \BB \big)
      \rVert_{\frob} & \leq 2 \delta \lVert \BLv \rVert_{2}
      \lVert \BB \rVert_{2}, \\
    \lVert \BCp \BRcheckp - \big( \BCp \BRp \big) \rightIso^{\herm}
      \rVert_{\frob} & \leq 2 \delta \lVert \BCp \rVert_{2}
      \lVert \BRp \rVert_{2}, \\
    \lVert \BCv \BRcheckp - \big( \BCv \BRp \big) \rightIso^{\herm}
      \rVert_{\frob} & \leq 2 \delta \lVert \BCv \rVert_{2}
      \lVert \BRp \rVert_{2}.
  \end{align}
  \end{subequations}
\end{theorem}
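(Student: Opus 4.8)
The plan is to follow the proof of \cite[Prop.~3.2]{GosGB22} essentially verbatim, the point being that that argument only uses the factored forms $\BPp = \BRp\BRp^{\herm}$, $\BQv = \BLv\BLv^{\herm}$ and the low-rank approximations $\BRcheckp\BRcheckp^{\herm} \approx \BPp$, $\BLcheckv\BLcheckv^{\herm} \approx \BQv$, never the realization. First I would reduce all five bounds to one statement about factors. Writing $\BX$ for a placeholder ranging over $\{\BM,\BK\}$, each intrusive quantity of \cref{eqn:soBTKeyQuantities} is one of $\BLv^{\herm}\BX\BRp$, $\BLv^{\herm}\BBu$, $\BCp\BRp$, $\BCv\BRp$, and the matching data matrix of \cref{eqn:soDataMatrices} is obtained by the substitutions $\BRp \mapsto \BRcheckp$, $\BLv \mapsto \BLcheckv$. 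So it suffices to produce partial isometries $\rightIso \in \C^{mJ \times n}$ and $\leftIso \in \C^{pK \times n}$ with $\BRcheckp \approx \BRp\rightIso^{\herm}$ and $\BLcheckv \approx \BLv\leftIso^{\herm}$, quantitatively, and then propagate the errors through the products.

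For the factor step I would invoke the perturbation bound for low-rank factorizations of an SPD matrix that underlies \cite[Prop.~3.2]{GosGB22}. Since $\BPp$ is SPD by the standing assumptions and, by \cref{eqn:quadControl}, $\BRcheckp\BRcheckp^{\herm}$ lies within $\tfrac{\delta}{1+\delta}\sigma_{\min}(\BPp)$ of $\BPp$ --- hence is itself nonsingular, because $\sigma_{\min}(\BRcheckp\BRcheckp^{\herm}) \geq \tfrac{1}{1+\delta}\sigma_{\min}(\BPp) > 0$ --- there is a partial isometry $\rightIso$ (for instance the isometry factor in the polar decomposition of $\BRcheckp^{\herm}\BRp$, which is the orthogonal-Procrustes-optimal choice) with
\begin{equation*}
  \lVert \BRcheckp - \BRp\rightIso^{\herm} \rVert_{\frob}
    \;\leq\; \frac{\lVert \BPp - \BRcheckp\BRcheckp^{\herm} \rVert_{\frob}}{\sigma_{\min}(\BRp)}
    \;\leq\; \frac{\delta}{1+\delta}\,\sigma_{\min}(\BRp)
    \;\leq\; \frac{\delta}{1+\delta}\,\lVert \BRp \rVert_{2},
\end{equation*}
where the middle step uses $\sigma_{\min}(\BPp) = \sigma_{\min}(\BRp)^{2}$. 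Applying the same reasoning to $\BQv = \BLv\BLv^{\herm}$ yields $\leftIso$ with $\lVert \BLcheckv - \BLv\leftIso^{\herm} \rVert_{\frob} \leq \tfrac{\delta}{1+\delta}\lVert \BLv \rVert_{2}$. Setting $\BRcheckp = \BRp\rightIso^{\herm} + \BE_{R}$ and $\BLcheckv^{\herm} = \leftIso\BLv^{\herm} + \BE_{L}^{\herm}$ with these Frobenius-norm bounds on $\BE_{R}$, $\BE_{L}$, the rest is bookkeeping.

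Substituting into $\BLcheckv^{\herm}\BX\BRcheckp$, the leading term is exactly $\leftIso(\BLv^{\herm}\BX\BRp)\rightIso^{\herm}$ and the remainder is $\leftIso\BLv^{\herm}\BX\BE_{R} + \BE_{L}^{\herm}\BX\BRp\rightIso^{\herm} + \BE_{L}^{\herm}\BX\BE_{R}$. Using $\lVert\leftIso\rVert_{2} = \lVert\rightIso\rVert_{2} = 1$, submultiplicativity of the spectral norm, and $\lVert\BE_{R}\rVert_{2} \leq \lVert\BE_{R}\rVert_{\frob}$ (likewise for $\BE_{L}$), each remainder term is bounded term by term, so the total is at most $\bigl(\tfrac{2\delta}{1+\delta} + \tfrac{\delta^{2}}{(1+\delta)^{2}}\bigr)\lVert\BLv\rVert_{2}\lVert\BX\rVert_{2}\lVert\BRp\rVert_{2}$. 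The elementary inequality $\tfrac{2\delta}{1+\delta} + \tfrac{\delta^{2}}{(1+\delta)^{2}} = \tfrac{\delta(2+3\delta)}{(1+\delta)^{2}} \leq 2\delta$, valid for all $\delta > 0$ (it rearranges to $0 \leq \delta + 2\delta^{2}$), then gives the first two claimed bounds with $\BX = \BM$ and $\BX = \BK$; this is precisely why the threshold in \cref{eqn:quadControl} is $\tfrac{\delta}{1+\delta}$. The remaining three bounds are more direct: $\BCp\BRcheckp - (\BCp\BRp)\rightIso^{\herm} = \BCp\BE_{R}$, $\BCv\BRcheckp - (\BCv\BRp)\rightIso^{\herm} = \BCv\BE_{R}$, and $\BLcheckv^{\herm}\BBu - \leftIso(\BLv^{\herm}\BBu) = \BE_{L}^{\herm}\BBu$, each bounded by $\tfrac{\delta}{1+\delta} \leq 2\delta$ times the relevant spectral norms.

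The main obstacle is the factor-perturbation bound in the second paragraph: a small Frobenius perturbation of an SPD matrix must be matched, up to multiplication by a partial isometry, by an $O(\lVert\cdot\rVert_{\frob}/\sigma_{\min})$ perturbation of a factor. This is exactly the technical lemma behind \cite[Prop.~3.2]{GosGB22}, and since it speaks only of $\BPp$, $\BQv$ and their factors --- not of the system realization --- it carries over unchanged; the only additional point to verify, done above, is that the quadrature-based Gramians remain nonsingular under \cref{eqn:quadControl}. Everything downstream is the routine norm estimate of the third paragraph.
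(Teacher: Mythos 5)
Your proposal is correct and follows essentially the same route as the paper, whose proof consists precisely of carrying over the argument of~\cite[Prop.~3.2]{GosGB22}: obtain partial isometries $\leftIso$, $\rightIso$ matching the quadrature-based factors to the exact Cholesky factors (the factor-closeness lemma with the $\tfrac{\delta}{1+\delta}\sigma_{\min}$ threshold), then propagate the factor errors through the products $\BLcheckv^{\herm}\BX\BRcheckp$, $\BLcheckv^{\herm}\BBu$, $\BCp\BRcheckp$, $\BCv\BRcheckp$ by submultiplicativity and absorb the cross terms into the constant $2\delta$. Your bookkeeping, including the observation that the proof never uses the system realization and hence applies verbatim to the second-order factors, matches the paper's (purely citational) proof, merely spelling out the details it leaves to~\cite{GosGB22}.
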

\begin{proof}
  Note that the proof of~\cite[Prop.~3.2]{GosGB22} makes no
  assumptions on the underlying system structure.
  Thus, the inequalities in~\cref{eqn:romBounds} follow from carefully
  executing the proof of~\cite[Prop.~3.2]{GosGB22} in the context of
  \Cref{thm:sobtFromData}.
\end{proof}

In general, \Cref{thm:error} can be applied to any low-rank implementation of
\sopvBt{} for which the approximate factors satisfy~\cref{eqn:quadControl}.
For our setting, this allows us to control the error in the reduced-order model
matrices via the error in the corresponding quadrature rules. 
Thus, for a sufficiently fine resolved quadrature rule, one can recover nearly
exact \sopvBt{}-based reduced-order models solely from data.


\section{Practical considerations}%
\label{sec:pracConsiderations}

In this section, we discuss practical considerations for the implementation of
\Cref{alg:soQuadBT}.
First, we provide formulas for the \soQuadbt{} method to construct real-valued
reduced-order models, and then we discuss how to infer suitable damping
parameters for the proportional damping model from data.


\subsection{Enforcing real-valued models from complex-valued data}%
\label{sec:realmodel}

In the context of systems used in  time-domain simulations, the underlying
system~\cref{eqn:soSys} is typically presumed to be real in the sense that the
inputs, state variables, and outputs evolve in a real-dimensional vector space. 
Thus, there exists a state-space realization of~\cref{eqn:soSys} given by
\emph{real-valued} matrices $\BM$, $\BD$, $\BK$, $\BBu$, $\BCp$ and $\BCv$. 
When such a real system generates the transfer function data used in
\Cref{alg:soQuadBT}, it is possible to modify \soQuadbt{} to produce a
real-valued realization of the reduced-order model.
In the subsequent discussion, we describe how to achieve the construction of
such real-valued realizations.
To this end, we require that the frequency-dependent damping coefficients $f$
and $g$ in~\cref{eqn:dampingModel} to satisfy
\begin{subequations} \label{eqn:conjAssumption}
\begin{align}
  \overline{f(s)} & = f(\overline{s}), \quad
  \overline{g(s)} = g(\overline{s}) 
  \quad\text{so that}\quad \\
  \overline{n(s)} & = n(\overline{s}), \quad
  \overline{d(s)} = d(\overline{s}), \quad
  \overline{h(s)} = h(\overline{s}),
\end{align}
\end{subequations}
for all $s\in \C$, where $n$, $d$ and $h$ are defined as
in~\cref{eqn:auxFunctions}. 
The assumption~\cref{eqn:conjAssumption} is trivially satisfied for Rayleigh
damping~\cref{eqn:Rayleigh} because $\alpha, \beta \in \R$, and notably
not satisfied for structural damping~\cref{eqn:structural} because
$\overline{g(s)} = -\i \tfrac{\overline{\eta}}{\overline{s}} \neq \i
\tfrac{\overline{\eta}}{\overline{s}} = g(\overline{s})$.
Although this is expected; indeed, a structurally-damped system can always be
expressed by a realization with zero damping and complex-valued stiffness
$\left(1 + \i \eta \right) \BK$.

For simplicity of presentation, we assume that both quadrature rules use an
equal number of nodes, $N = J = K$, and that $\BCv = \Bzero$;
the construction that we describe in the following also applies to the general case.
Suppose that a \emph{symmetric} quadrature rule is used to generate the
approximate factors~\cref{eqn:soQuadContFactor,eqn:soQuadObsvFactor}, i.e.,
the nodes are symmetrically distributed according to
\begin{subequations} \label{eqn:symmNodes}
\begin{align}
  \vartheta_{1} & < \vartheta_{3} < \ldots < \vartheta_{N - 1} < 0 <
    \vartheta_{2} < \vartheta_{4} < \ldots < \vartheta_{N}, \\
  \zeta_{1} & < \zeta_{3} < \ldots < \zeta_{N-1} < 0 <
    \zeta_{2} < \zeta_{4} < \ldots < \zeta_{N},
\end{align}
\end{subequations}
and satisfy $\vartheta_{i} = -\vartheta_{i+1}$ and $\zeta_{i} = -\zeta_{i+1}$,
for $i = 1, 3, \ldots, N-1$.
We assume additionally that the corresponding quadrature weights are computed
such that $\varrho_{i} = \varrho_{i + 1}$ and $\varphi_{i} = \varphi_{i + 1}$
for all $i = 1, 3, \ldots, N-1$.
This symmetric ordering scheme ensures that the transfer function evaluations
at these nodes come in complex conjugate pairs, that is,
$\overline{\BG(\i \vartheta_i)}=\BG(\i\vartheta_{i + 1})$ and
$\overline{\BG(\i \zeta_{i})} = \BG(\i \zeta_{i + 1})$,
for $i = 1, 3\ldots, N - 1$.
From this conjugate symmetry and the assumption~\cref{eqn:conjAssumption}, the
$2p \times 2m$-block entries of $\bbLM$ corresponding to the nodes
$(\i\vartheta_{k}, -\i \vartheta_{k})$ and $(\i \zeta_{j}, -\i\zeta_{j})$
satisfy
\begin{align*}
  & \begin{bmatrix} \bbLM_{\Bk, \Bj} & \bbLM_{\Bk, \Bj + \Bone} \\
    \bbLM_{\Bk + \Bone, \Bj} & \bbLM_{\Bk + \Bone, \Bj + \Bone} \end{bmatrix} \\
  & = -\varphi_{k} \varrho_{j}
    \begin{bmatrix}
    \displaystyle
    \frac{d(\i \vartheta_{k}) \BG(\i \vartheta_{k}) -
    d(\i \zeta_{j}) \BG(\i \zeta_{j})}%
    {d(\i \vartheta_{k}) d(\i \zeta_{j}) \left( h(\i \vartheta_{k}) -
    h(\i \zeta_{j}) \right)} &
    \displaystyle
    \frac{d(\i \vartheta_{k}) \BG(\i \vartheta_{k}) -
    \overline{d(\i \zeta_{j}) \BG(\i \zeta_{j})}}%
    {d(\i \vartheta_{k}) \overline{d(\i \zeta_{j})} \left( h(\i \vartheta_{k}) -
    \overline{h(\i \zeta_{j})} \right)} \\[3pt]
    \displaystyle
    \frac{\overline{d(\i \vartheta_{k}) \BG(\i \vartheta_{k})} -
    d(\i \zeta_{j}) \BG(\i \zeta_{j})}%
    {\overline{d(\i \vartheta_{k})} d(\i \zeta_{j}) \left(
    \overline{h(\i \vartheta_{k})} - h(\i \zeta_{j}) \right)} & 
    \displaystyle 
    \frac{\overline{d(\i \vartheta_{k})} \overline{\BG(\i \vartheta_{k})} -
    \overline{d(\i \zeta_{j})} \overline{\BG(\i \zeta_{j})}}%
    {\overline{d(\i \vartheta_{k})} \overline{d(\i \zeta_{j})}
    \left( \overline{h(\i \vartheta_{k})} - \overline{h(\i \zeta_{j})} \right)}
    \end{bmatrix},
\end{align*}
where $\Bk$ and $\Bk + \Bone$ refer to the indices $(k - 1) p + 1 : k p$ and
$k p + 1 : (k + 1) p$ and similarly for $\Bj$ and $\Bj + \Bone$.
From the above, we observe that
$\overline{\bbLM_{\Bk, \Bj}} = \bbLM_{\Bk + \Bone, \Bj + \Bone}$ and
$\overline{\bbLM_{\Bk, \Bj + \Bone}} = \bbLM_{\Bk + \Bone, \Bj}$.
Define the unitary matrix $\BJ_{\ell} \defeq \frac{1}{\sqrt{2}}
\begin{bmatrix} \BI_{\ell} & -\i \BI_{\ell} \\ \BI_{\ell} &  \hphantom{-}\i
\BI_{\ell} \end{bmatrix} \in \C^{2 \ell \times 2 \ell}$ for some positive
integer $\ell \in \N$.
One can readily verify that
\begin{equation*}
  \BJ_{p}^{\herm} \begin{bmatrix} \bbLM_{\Bk, \Bj} & \bbLM_{\Bk, \Bj + \Bone} \\
    \bbLM_{\Bk + \Bone, \Bj} & \bbLM_{\Bk + \Bone, \Bj + \Bone} \end{bmatrix}
    \BJ_{m} = \begin{bmatrix} \hphantom{-}\real(\bbLM_{\Bk, \Bj} +
    \bbLM_{\Bk, \Bj + \Bone}) & \imag(\bbLM_{\Bk, \Bj} -
    \bbLM_{\Bk, \Bj + \Bone}) \\ 
    -\imag(\bbLM_{\Bk, \Bj} + \bbLM_{\Bk, \Bj + \Bone}) &
    \real(\bbLM_{\Bk, \Bj} - \bbLM_{\Bk, \Bj + \Bone} ) \end{bmatrix},
\end{equation*}
which is real valued. 
Thus, the matrix $\bbLM$ in~\cref{eqn:MfromData} is unitarily equivalent to a
real-valued matrix $\widetilde{\bbLM} \defeq (\BI_{N/2} \otimes
\BJ_{p}^{\herm}) \bbLM (\BI_{N/2}\otimes \BJ_{m}) \in \R^{p K \times m J}$,
where $\otimes$ denotes the matrix Kronecker product~\cite[Chap.~12.3]{GolV13}.
Similarly, the matrices $\bbLK$, $\bbBu$ and $\bbCp$
in~\cref{eqn:KfromData,eqn:BCpCvfromData} are unitarily equivalent to
real-valued matrices as well.
Namely, one can verify that
\begin{equation*}
  \widetilde{\bbLK} \defeq (\BI_{N/2} \otimes \BJ_{p}^{\herm}) \bbLK
    (\BI_{N/2} \otimes \BJ_{m}),\quad
  \widetilde{\mathbb{B}}_{\inp} \defeq (\BI_{N/2} \otimes \BJ_{p}^{\herm}\big)
    \bbBu \quad\mbox{and}\quad
  \widetilde{\mathbb{C}}_{\pos} = \bbCp (\BI_{N/2} \otimes \BJ_{m})
\end{equation*}
are all real valued by construction.
Therefore, we may use these unitary transformations to obtain real-valued data
matrices in \Cref{alg:soQuadBT} and, consequently, a real-valued realizations
of the reduced-order models.
Lastly, we mention that this realification procedure can also be applied to
the second-order Hermite Loewner matrices derived in \Cref{thm:sobtFromData_Hermite}. 
If the underlying system is real-valued and the data satisfy the hypotheses 
of~\Cref{thm:stability}, 
then this allows one to obtain a real-valued realization of an asymptotically stable system.
In this setting, it will be the case that $\zeta_{i+1} = s_{i+1} = - s_{i} = \vartheta_{i}$
and $\zeta_{i} = s_{i} = - s_{i+1} = \vartheta_{i+1}$
due to~\cref{eqn:symmNodes}, and so evaluations of the
derivative of $\BG$ will appear in the sub- and super-diagonals of $\bbLM$ and $\bbLK$.


\subsection{Optimal choice of damping parameters}%
\label{ss:optDamping}

Thus far, we have assumed that the coefficient functions $f$ and $g$ in
the generalized proportional damping model \cref{eqn:dampingModel} are given and
can be freely evaluated.
However, in a truly data-driven regime, these would be unknown. 
We discuss how these coefficients can be inferred from data in the simplified
cases of Rayleigh damping~\cref{eqn:Rayleigh} and
structural damping~\cref{eqn:structural};
this corresponds to inferring the parameters $\alpha, \beta \geq 0$ or
$\eta \geq 0$, respectively.
In either case, we assume the data $\Bg_{k} = \BG(s_{k})$ to be prescribed at a
fixed number of points $s_{k} \in \C$, for $k = 1, \ldots, N$; we do not assume
this data to have any particular structure other than that it is generated by
the true underlying system's transfer function $\BG$.
A similar idea has been employed in the previously mentioned second-order
Loewner framework~\cite{PonGB22}, as well as in the data-driven modeling of
time-delay systems~\cite{SchU16}.

We estimate the unknown parameters by minimizing a least squares fit of the given data over some realistic range(s) of parameter values, which are typically available in practice.
For the Rayleigh-damped case, the optimization problem we aim to solve is 
\begin{align}
\begin{split}
\label{eqn:rayleighOpt}
  (\alphaStar, \betaStar) & = \argmin\limits_{(\alpha, \beta) \in \intAlpha \times \intBeta}
    \sum\limits_{k = 1}^{N} \objRayleigh(\alpha, \beta), \quad\text{with}\\
  \objRayleigh(\alpha, \beta) & \defeq \left\lVert \Bg_{k} - \left(\BCpr + s_k \BCvr\right)\left((s_k^{2} + s_k \alpha) \BMr + (1 + s_k \beta) \BKr\right)^{-1} \BBur \right\rVert_{\frob}^{2}
\end{split}
\end{align}
where $\intAlpha \defeq [\alphaMinus,\alphaPlus]$ and $\intBeta \defeq [\betaMinus,\betaPlus]$.
For the structurally damped case, we have
\begin{align} 
\begin{split}
\label{eqn:structOpt}
  \etaStar &= \argmin\limits_{\eta \in \intEta} \sum\limits_{k = 1}^{N}
    \objStruct(\eta), \quad\text{with}\\
    \objStruct(\eta) &\defeq \left\lVert \Bg_{k} -\left(\BCpr + s_k \BCvr\right)\left(s_k^{2} \BMr + (1 + \i \eta ) \BKr\right)^{-1} \BBur \right\rVert_{\frob}^{2}
\end{split}
\end{align}
where $\intEta \defeq [\etaMinus,\etaPlus]$.
Any suitable off-the-shelf optimization algorithm can be used to
minimize the cost function~\cref{eqn:rayleighOpt} over the possible values of
$\alpha \in \intAlpha$ and $\beta \in \intBeta$ simultaneously if the system is Rayleigh damped,
or, in the case of structural damping, to minimize~\cref{eqn:structOpt} over
$\eta \in \intEta$.
Even though \soQuadbt{} constructs reduced models with $\BMr = \BI_{r}$, this
is not enforced here to make the presentation as general as possible. 
Thus, the minimization procedure we describe can be applied to any data-driven
reduced model~\cref{eqn:soSysRed} with unknown damping parameters; e.g.,
it can be applied to those computed by the second-order Loewner
framework~\cite{PonGB22}.

We emphasize that the reduced-order matrices may depend explicitly on the parameters $\alpha$, $\beta$, or $\eta$ as in the case of \soQuadbt{} (\Cref{alg:soQuadBT}).
However, we omit this dependence in our notation to simplify the presentation.
In this case, the reduced-order matrices need to be recomputed for every evaluation of the cost function in the minimization procedure.
Once the optimal coefficients $\alphaStar \in \intAlpha$ and $\betaStar \in \intBeta$ or $\etaStar \in \intEta$ are computed by minimizing~\cref{eqn:rayleighOpt} or~\cref{eqn:structOpt}, respectively, a final reduced model is computed by \Cref{alg:soQuadBT} using these values.

If the damping coefficients are treated as fixed in the construction of the reduced model matrices, or if one has access to an already computed reduced model~\cref{eqn:soSysRed}, then the damping coefficients are only used to form the
reduced-order damping term from the mass and
stiffness matrices.
In this case, the objective functions~\cref{eqn:rayleighOpt,eqn:structOpt} can be minimized without reassembling the reduced model at every step. To this end, we derive gradients of the objective functions in~\cref{eqn:rayleighOpt,eqn:structOpt} when the reduced-order model matrices do not depend on the damping coefficients.
For our purposes, we employ these gradients in an optional post-processing step at the end of~\Cref{alg:soQuadBT} to form the reduced-order damping matrix.

\begin{theorem} \label{thm:grads} \allowdisplaybreaks
  Consider a reduced-order model of the form~\cref{eqn:soSysRed}.
  Let
  \begin{alignat*}{3}
    \BvarphiRayleigh & \colon \C \times [0,\infty) \times [0,\infty) \to \Cnn & \quad\text{with}\quad &&
    \BvarphiRayleigh(s, \alpha, \beta) & \defeq
    (s^{2} + s \alpha) \BMr + (1 + s\beta) \BKr, \\
    \BvarphiStruct & \colon \C \times [0,\infty) \to \Cnn & \quad\text{with}\quad &&
    \BvarphiStruct(s, \eta) & \defeq s^{2} \BMr + (1 + \i \eta ) \BKr, \\
    \CC & \colon \C \to \Cpn & \quad\text{with} \quad &&
    \CC(s) & \defeq \BCpr + s\BCvr.
  \end{alignat*}
  For the minimization problem~\cref{eqn:rayleighOpt}, the gradients with
  respect to the optimization parameters are
  \begin{subequations} \label{eqn:RayleighDeriv}
  \begin{align} \label{eqn:alphaDeriv}
    \frac{\partial}{\partial{\alpha}} \objRayleigh(\alpha, \beta) &
    = 2 \real \bigg( s_{k} \trace \bigg( \BMr \BvarphiRayleigh(s_{k}, \alpha, \beta)^{-1} \BBur\Big(\Bg_{k} 
      \\ \nonumber
    & \quad{}-{}
      \CC(s_{k})
      \BvarphiRayleigh(s_{k}, \alpha, \beta)^{-1} \BBur
      \Big)^{\herm} \CC(s_{k}) \BvarphiRayleigh(s_{k}, \alpha, \beta)^{-1}
      \bigg) \bigg), \\
    \label{eqn:betaDeriv}
    \frac{\partial}{\partial{\beta}} \objRayleigh(\alpha, \beta) &
      = 2 \real \bigg( s_{k} \trace \bigg( \BKr \BvarphiRayleigh(s_{k}, \alpha, \beta)^{-1} \BBur \Big( \Bg_{k} 
      \\ \nonumber
    & \quad{}-{}
      \CC(s_{k})
      \BvarphiRayleigh(s_{k}, \alpha, \beta)^{-1} \BBu
      \Big)^{\herm} \CC(s_{k}) \BvarphiRayleigh(s_{k}, \alpha, \beta)^{-1}
      \bigg) \bigg).
  \end{align}
  \end{subequations}
  For the minimization problem~\cref{eqn:structOpt}, the gradient is given by
  \begin{align} \label{eqn:etaDeriv}
  \frac{\partial}{\partial{\eta}} \objStruct(\eta) &
    = 2 \real \bigg( \trace \bigg( \BKr \BvarphiStruct(s_{k}, \eta)^{-1} \\ \nonumber
  & \quad{}\times{}
    \BBur \Big(\Bg_{k} - \CC(s_{k}) \BvarphiStruct(s_{k}, \eta)^{-1} \BBur
    \Big)^{\herm} \CC(s_{k}) \BvarphiStruct(s_{k}, \eta)^{-1} \bigg) \bigg).
  \end{align}
\end{theorem}
\begin{proof} \allowdisplaybreaks
  It suffices to show~\cref{eqn:betaDeriv} since the proofs
  of~\cref{eqn:RayleighDeriv} follow analogously.
  Define $\CE_{k} \defeq \Bg_{k} - \CC(s_{k}) \BvarphiRayleigh(s_{k}, \alpha,
  \beta)^{-1} \BBur \in \C^{p\times m}$ so that
  $\objRayleigh(\alpha, \beta) = \lVert \CE_{k} \rVert_{\frob}^{2} =
  \trace(\CE_{k}^{\herm} \CE_{k})$.
  Now, we consider an arbitrary perturbation $\pertBeta \in \R$, and expand
  the helper function $\BvarphiRayleigh(s_{k}, \alpha, \beta + \pertBeta)$ as
  \begin{align*}
    \BvarphiRayleigh(s_{k}, \alpha, \beta + \pertBeta) &
      = \Big( (s_{k}^{2} + s_{k} \alpha) \BMr +
      (1 + s_{k} \beta + s_{k} \pertBeta) \BKr \Big) \\
    & = \BvarphiRayleigh(s_{k}, \alpha, \beta) + s_{k} \pertBeta \BKr.
  \end{align*}
  By choosing $\pertBeta$ to be small enough such that
  $\lVert \BvarphiRayleigh(s_{k}, \alpha_{k}, \beta_{k}) s_{k} \pertBeta
  \BKr \rVert_{2} < 1$, we may expand
  $\BvarphiRayleigh(s_{k}, \alpha, \beta + \pertBeta)^{-1}$ as a Neumann series
  into
    \begin{align*}
      \BvarphiRayleigh(s_{k}, \alpha, \beta + \pertBeta)^{-1}
      & = \Big(\BvarphiRayleigh(s_{k}, \alpha, \beta) -
        (-s_{k} \pertBeta \BKr) \Big)^{-1} \\
      & = \sum\limits_{j = 0}^{\infty}
        \Big( \BvarphiRayleigh(s_{k}, \alpha, \beta)^{-1}
        (-s_{k} \pertBeta\BKr) \Big)^{j}
        \BvarphiRayleigh(s_{k}, \alpha, \beta)^{-1} \\
      & = \BvarphiRayleigh(s_{k}, \alpha, \beta)^{-1} - s_{k} \pertBeta
        \BvarphiRayleigh(s_{k}, \alpha, \beta)^{-1} \BKr
        \BvarphiRayleigh(s_{k}, \alpha, \beta)^{-1}\\
        &\quad{}+{} \mathcal{O}(\lVert \pertBeta \rVert_{\frob}^{2}).
    \end{align*}
    Using this expression, the objective function can be written as
    \begin{align*}
      & \objRayleigh(\alpha, \beta +\pertBeta) \\
      & = \lVert \CE_{k} + s_{k} \pertBeta \CC(s_{k})
        \BvarphiRayleigh(s_{k}, \alpha, \beta)^{-1} \BKr
        \BvarphiRayleigh(s_{k}, \alpha, \beta)^{-1} \BBur \rVert_{\frob}^{2}
        + \mathcal{O}(\lVert \pertBeta \rVert_{\frob}^{2}) \\
      & = \trace(\CE_{k}^{\herm} \CE_{k}) + 2 \real \Big( s_{k} \trace \Big(
        \CE_{k}^{\herm} \CC(s_{k}) \BvarphiRayleigh(s_{k}, \alpha, \beta)^{-1}
        \BKr \BvarphiRayleigh(s_{k}, \alpha, \beta)^{-1} \BBur \Big) \Big) \pertBeta\\
      &\quad{}+{} \mathcal{O}(\lVert \pertBeta \rVert_{\frob}^{2})\\
      & = \objRayleigh(\alpha, \beta) + 2 \real \Big( s_{k} \trace \Big( \BKr
        \BvarphiRayleigh(s_{k}, \alpha,\beta)^{-1} \BBur \CE_{k}^{\herm}
        \CC(s_{k}) \BvarphiRayleigh(s_{k}, \alpha, \beta)^{-1}
        \Big) \Big) \pertBeta \\
      & \quad{}+{} \mathcal{O}(\lVert \pertBeta \rVert_{\frob}^{2}),
    \end{align*}
    which proves~\cref{eqn:betaDeriv}.
\end{proof}


\section{Numerical Results}
\label{sec:numerics}

We demonstrate the proposed \soQuadbt{} method on two benchmark examples of
second-order dynamical systems from the literature.
The reported numerical experiments have been performed on a MacBook Air with
8\,GB of RAM and an Apple M2 processor running macOS Ventura version 13.4 with
MATLAB 23.2.0.2515942 (R2023b) Update 7.
The source codes, data, and results of the numerical experiments are available
at~\cite{supReiW26}.


\subsection{Experimental setup}%
\label{sec:setup}

In comparison to our proposed \soQuadbt{} approach, we consider the following
methods in the numerical experiments:
\begin{description}
  \item[\soLoewner{}] is the interpolatory Loewner framework for second-order
    systems with Rayleigh damping from~\cite{PonGB22} and highlighted in
    \Cref{remark:soLoewner};
  \item[\foQuadbt{}] is the data-driven balanced truncation for first-order
    systems from~\cite{GosGB22};
  \item[\sopvBt{}] is the intrusive position-velocity balanced truncation
    for second-order systems from~\cite{ReiS08} and discussed in
    \Cref{sec:soBt};
  \item[\Btr{}] is the intrusive unstructured balanced truncation for
    first-order systems~\cref{eqn:foSys} discussed in \Cref{sec:foBt}.
\end{description}
The intrusive \sopvBt{} is included as a point of comparison for the
data-driven aspect of \soQuadbt{}.
The intrusive and data-driven first-order methods \Btr{} and \foQuadbt{}
are included for the comparison between our proposed structured approach
and unstructured models.
The software package MORLAB version 6.0~\cite{BenW21c} is used for
computing the intrusive reduced models by \sopvBt{} and \Btr{}.

For the data-based approaches, we use an exponential trapezoidal quadrature
rule to implicitly approximate the Gramians
in~\cref{eqn:soGramians,eqn:foGramians}.
While \soLoewner{} does not typically use interpolation points derived from
quadrature rules, we fix the sampling points across all the data-based
approaches for consistency.
For simplicity, an equal number of left and right quadrature nodes is used so
that $J = K = N$, and we assume that the nodes are symmetrically distributed
according to~\cref{eqn:symmNodes}.
In each example, the quadrature rules are applied in chosen intervals along
the imaginary axis.

For the presentation of the numerical results, we use the following performance
measures.
For visual comparisons, we plot the magnitude of the second-order transfer
function~\cref{eqn:soSys} at discrete points on the positive imaginary axis.
We also show the pointwise relative approximation errors of the transfer
functions
\begin{equation} \label{eqn:relerr}
  \relerr(\i \omega_{k}) = \frac{\lVert \BG(\i \omega_{k}) -
    \BGr(\i\omega_{k}) \rVert_{2}}{\lVert \BG(\i\omega_k) \rVert_{2}},
\end{equation}
at discrete frequencies $\omega_{k} \in \Omega$ from the finite interval
$\Omega = [\omegaMin, \omegaMax] \subset [0, \infty)$.
The specific choice of $\Omega$ varies between the examples.
Additionally, we score the performance of the different methods by using local
approximations to the relative $\CH_{\infty}$ error and relative $\CH_{2}$ error via
\begin{subequations}
\label{eqn:Hrelerrs}
\begin{align} 
  \label{eqn:Hinftyrelerr}
  \relerr_{\CH_{\infty}} &= \frac{\max_{\omega_{k} \in \Omega} \lVert
    \BG(\i \omega_{k}) - \BGr(\i \omega_{k}) \rVert_{2}}%
    {\max_{\omega_{k} \in \Omega} \lVert \BG(\i \omega_{k}) \rVert_{2}},
    \quad\text{and}\\
  \label{eqn:H2relerr}
  \relerr_{\CH_{2}} &= \sqrt{\frac{\sum\limits_{\omega_{k} \in \Omega} \lVert
    \BG(\i \omega_{k}) - \BGr(\i \omega_{k}) \rVert_{\frob}^2}%
    {\sum\limits_{\omega_{k} \in \Omega} \lVert \BG(\i \omega_{k})
    \rVert_{\frob}^2}}.
\end{align}
\end{subequations}


\subsection{Butterfly Gyroscope}%
\label{sec:butterfly}

The first example that we consider is the butterfly gyroscope from the Oberwolfach
Benchmark Collection~\cite{Bil05}, which models a vibrating
mechanical gyroscope used for inertial navigation.
The model itself is a second-order system~\cref{eqn:soSys} with
$n = 17\,361$ states, $m = 1$ input and $p = 12$ position outputs.
The mass and stiffness matrices are symmetric, and the internal damping is
described by Rayleigh damping with $f(s) = \alpha = 0$ and
$g(s) = \beta = 10^{-6}$ according to~\cref{eqn:dampingModel}.
The response behavior of interest occurs in the high-frequency ranges.
We study this benchmark to illustrate how our proposed method applies to a
system with Rayleigh damping~\cref{eqn:Rayleigh}.


\subsubsection{Known damping coefficients}%
\label{sss:knownCoeff}

For the first set of experiments with the butterfly gyroscope, we assume that
the true damping coefficients $\alpha = 0$ and $\beta=10^{-6}$ are known.
Later on, in \Cref{sss:estCoeff}, we investigate the performance of \soQuadbt{}
reduced models with estimated damping coefficients $\alphaStar$ and $\betaStar$
computed using the optimization procedure from \Cref{ss:optDamping}.

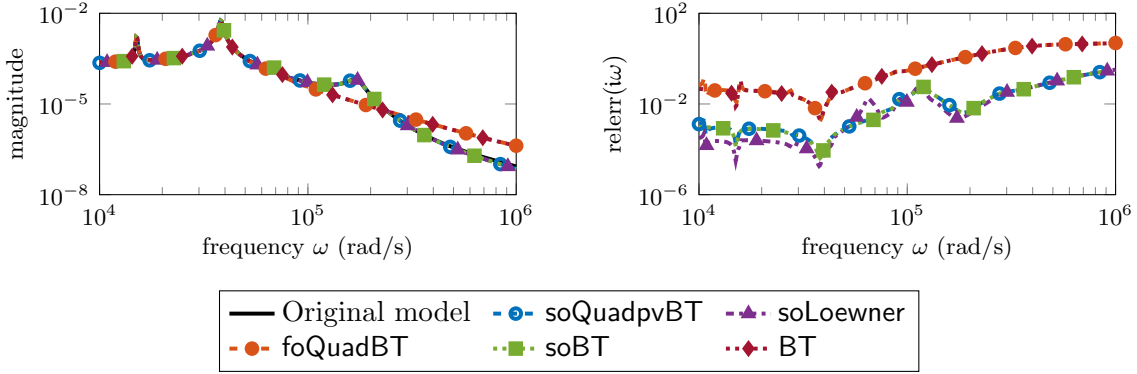
\begin{figure}[t!]
  \centering
  \begin{subfigure}[b]{.49\linewidth}
    \centering
  \tikzexternalenable%
  \tikzsetnextfilename{Butterfly_r10_mag}%
  \begin{tikzpicture}[font = \plotfontsize]
  \pgfplotstableread{graphics/data/butterfly_r10_N200_1e4to1e6_mag.dat}\tableINPUT
  
  \begin{loglogaxis}[%
    width  = .725\textwidth,
    height = .11\textheight,
    scale only axis,
    xmin = 1e4,
    xmax = 1e6,
    ymin = 1e-8,
    ymax = 1e-2,
    xminorticks = true,
    yminorticks = true,
    xlabel = {frequency $\omega$ (rad/s)},
    ylabel = {magnitude},
    ylabel style   = {yshift = -.3em},
    xlabel style   = {yshift = .3em},
    scaled x ticks = false,
    x tick label style = {/pgf/number format/1000 sep={\,}},
    y tick label style = {/pgf/number format/1000 sep={\,}},
    cycle list name    = plotlist
  ]
  
    \foreach \y in {1, 2, ..., 6}{
      \addplot+ table[x index = 0, y index = \y] {\tableINPUT};
    }
  \end{loglogaxis}
\end{tikzpicture}%
  \tikzexternaldisable%

  \end{subfigure}%
  \hfill%
  \begin{subfigure}[b]{.49\linewidth}
    \centering
  \tikzexternalenable%
  \tikzsetnextfilename{Butterfly_r10_error}%
  \begin{tikzpicture}[font = \plotfontsize]
  \pgfplotstableread{graphics/data/butterfly_r10_N200_1e4to1e6_error.dat}\tableINPUT
  
  \begin{loglogaxis}[%
    width  = .725\textwidth,
    height = .11\textheight,
    scale only axis,
    xmin = 1e4,
    xmax = 1e6,
    ymin = 1e-6,
    ymax = 1e2,
    xminorticks = true,
    yminorticks = true,
    xlabel = {frequency $\omega$ (rad/s)},
    ylabel = {$\relerr(\i\omega)$},
    ylabel style   = {yshift = -.3em},
    xlabel style   = {yshift = .3em},
    scaled x ticks = false,
    x tick label style = {/pgf/number format/1000 sep={\,}},
    y tick label style = {/pgf/number format/1000 sep={\,}},
    cycle list name    = plotlist
  ]

  \pgfplotsset{cycle list shift = 1}
  
    \foreach \y in {1, 2, ..., 5}{
      \addplot+ table[x index = 0, y index = \y] {\tableINPUT};
    }
  \end{loglogaxis}
\end{tikzpicture}%
  \tikzexternaldisable%

  \end{subfigure}

  \vspace{.5\baselineskip}
  \tikzexternalenable%
  \tikzsetnextfilename{legend}%
  \begin{tikzpicture}
  \begin{axis}[%
    hide axis,
    width  = 1mm,
    height = 1mm,
    scale only axis,
    xmin = 0,
    xmax = 1,
    ymin = 0,
    ymax = 1,
    legend columns = 3, 
    legend style   = {
      at     = {(0,0)},
      anchor = center,
      /tikz/every even column/.append style = {column sep = 0.2cm}},
    legend cell align  = {left},
    clip mode          = individual,
    cycle list name    = plotlist]

    \foreach \y in {1, 2, ..., 6}{
      \addplot+ coordinates{ (0, 0) };
    }
    
    \addlegendentry{Original model}
    \addlegendentry{\soQuadbt{}}
    \addlegendentry{\soLoewner{}}
    \addlegendentry{\foQuadbt{}}
    \addlegendentry{\soBt{}}
    \addlegendentry{\Btr{}}
  \end{axis}
\end{tikzpicture}%
  \tikzexternaldisable%

  \caption{Frequency response and pointwise relative errors for $r=10$
    reduced-order models of the butterfly gyroscope benchmark with known
    damping parameters.
    The data-driven balancing methods and their intrusive counterparts are
    indistinguishable.}
  \label{fig:ButterflyNumerics}
\end{figure}

\begin{table}[t!]
  \centering
  \begin{tabular}{llllll}
    \hline\noalign{\smallskip}
      & \multicolumn{1}{c}{\soQuadbt}
      & \multicolumn{1}{c}{\soLoewner}
      & \multicolumn{1}{c}{\foQuadbt}
      & \multicolumn{1}{c}{\sopvBt}
      & \multicolumn{1}{c}{\Btr}\\
      \noalign{\smallskip}\hline\noalign{\smallskip}
      $\relerr_{\CH_\infty}$
      & $4.4215\texttt{e-}4$
      & $4.5263\texttt{e-}4$
      & $1.0498\texttt{e-}2$
      & $\boldsymbol{4.4202\texttt{e-}4}$
      & $1.0505\texttt{e-}2$\\
      $\relerr_{\CH_2}$
      & $8.0512\texttt{e-}4$
      & $8.3923\texttt{e-}4$
      & $2.5685\texttt{e-}2$
      & $\boldsymbol{8.0166\texttt{e-}4}$
      & $2.5297\texttt{e-}2$\\
      \noalign{\smallskip}\hline\noalign{\smallskip}
  \end{tabular}
  \caption{Relative $\CH_{\infty}$ errors~\cref{eqn:Hinftyrelerr} and
    $\CH_2$ errors~\cref{eqn:H2relerr} for the $r = 10$ reduced-order models of the butterfly gyroscope with known damping coefficients.
    The smallest error is highlighted in \textbf{boldface}.}
  \label{tab:ButterflyRelErrors}
\end{table}

Surrogate models of order $r=10$ are computed using \soQuadbt{}, \soLoewner{},
\foQuadbt{}, \sopvBt{} and \Btr{}. 
For the data-based approaches, we use $N = 200$ left and right quadrature nodes.
These are chosen to be interwoven logarithmically spaced points in the interval
$-\i[10^{4}, 10^{6}] \cup \i[10^{4}, 10^{6}]$ and symmetrically distributed
according to~\cref{eqn:symmNodes}.
We use the realification process described in \Cref{sec:realmodel} for the
\soQuadbt{} and \soLoewner{} reduced models; for \foQuadbt{}, a similar
procedure from~\cite[Sec.~4.1]{GosGB22} is applied.
The frequency response and pointwise relative errors~\cref{eqn:relerr} of the
reduced-order models are presented in \Cref{fig:ButterflyNumerics}.
We see that the data-driven approaches \soQuadbt{} and \foQuadbt{} are visibly
indistinguishable from their intrusive counterparts \soBt{} and \Btr{}.
Also, we observe that each of the approaches that preserve the underlying
second-order structure, namely \soQuadbt{}, \soLoewner{} and \sopvBt{}, 
provide very satisfactory approximations and are able to capture the response
peaks of the full-order transfer function.
By comparison, the unstructured methods \foQuadbt{} and \Btr{} perform roughly
two orders of magnitude worse throughout the entire frequency range, and
entirely miss the response between $10^{5}$ and $10^{6}$\,rad/s.
This results from the fact that the first-order approximations having the same
dimension as the second-order approximations cannot provide the same level of
accuracy, regardless of whether that approximation is data-driven or intrusive.
\Cref{tab:ButterflyRelErrors} shows the relative error
measures~\cref{eqn:Hinftyrelerr,eqn:H2relerr} for the different methods.
The structure-preserving methods clearly outperform the unstructured ones, and
the \soQuadbt{} and \soBt{} reduced models produce very similar
error values.


\subsubsection{Estimating damping coefficients}%
\label{sss:estCoeff}

Next, we employ the minimization procedure from \Cref{ss:optDamping} to infer the
Rayleigh damping coefficients from data, and use these estimated coefficients
$\alphaStar$ and $\betaStar$ in the construction of the reduced-order models.
For this set of experiments, we only consider \soQuadbt{} to highlight the
effects of the estimated coefficients on the reduced model error.
The data $\Bg_{k}$ used to infer the damping coefficients are the evaluations
of the transfer function $\BG(\i \vartheta_{k})$ and $\BG(\i\zeta_{k})$ at the
left and right quadrature points.
For solving the minimization problem~\cref{eqn:rayleighOpt}, we use the optimizer \texttt{particleswarm} provided by MATLAB's global optimization toolbox.
Once suitable minima $\alphaStar$ and $\betaStar$ are determined and a reduced model is computed by~\Cref{alg:soQuadBT} using these parameters,
we employ the post-processing step described in~\Cref{ss:optDamping} using the gradients~\cref{eqn:RayleighDeriv} from \Cref{thm:grads} to form the reduced-order damping term. For this post-processing step,
the built-in MATLAB optimizer \texttt{fminunc} is used;
the internal minimization algorithm is the BFGS method with step and optimality
tolerances both set to $10^{-10}$.

\begin{figure}[t!]
  \centering
  \begin{subfigure}[b]{.49\linewidth}
    \centering
  \tikzexternalenable%
  \tikzsetnextfilename{Butterfly_dampingOpt_r10_mag}%
  \begin{tikzpicture}[font = \plotfontsize]
  \pgfplotstableread{graphics/data/butterfly_dampingOpt_r10_N200_1e4to1e6_mag.dat}\tableINPUT
  
  \begin{loglogaxis}[%
    width  = .725\textwidth,
    height = .11\textheight,
    scale only axis,
    xmin = 1e4,
    xmax = 1e6,
    ymin = 1e-8,
    ymax = 1e-2,
    xminorticks = true,
    yminorticks = true,
    xlabel = {frequency $\omega$ (rad/s)},
    ylabel = {magnitude},
    ylabel style   = {yshift = -.3em},
    xlabel style   = {yshift = .3em},
    scaled x ticks = false,
    x tick label style = {/pgf/number format/1000 sep={\,}},
    y tick label style = {/pgf/number format/1000 sep={\,}},
    cycle list name    = damping_plotlist
  ]
  
    \foreach \y in {1, 2, ..., 4}{
      \addplot+ table[x index = 0, y index = \y] {\tableINPUT};
    }
  \end{loglogaxis}
\end{tikzpicture}%
  \tikzexternaldisable%

  \end{subfigure}%
  \hfill%
  \begin{subfigure}[b]{.49\linewidth}
    \centering
  \tikzexternalenable%
  \tikzsetnextfilename{Butterfly_dampingOpt_r10_error}%
  \begin{tikzpicture}[font = \plotfontsize]
  \pgfplotstableread{graphics/data/butterfly_dampingOpt_r10_N200_1e4to1e6_error.dat}\tableINPUT
  
  \begin{loglogaxis}[%
    width  = .725\textwidth,
    height = .11\textheight,
    scale only axis,
    xmin = 1e4,
    xmax = 1e6,
    ymin = 1e-6,
    ymax = 1e2,
    xminorticks = true,
    yminorticks = true,
    xlabel = {frequency $\omega$ (rad/s)},
    ylabel = {$\relerr(\i\omega)$},
    ylabel style   = {yshift = -.3em},
    xlabel style   = {yshift = .3em},
    scaled x ticks = false,
    x tick label style = {/pgf/number format/1000 sep={\,}},
    y tick label style = {/pgf/number format/1000 sep={\,}},
    cycle list name    = damping_plotlist
  ]

  \pgfplotsset{cycle list shift = 1}
  
    \foreach \y in {1, 2, 3}{
      \addplot+ table[x index = 0, y index = \y] {\tableINPUT};
    }
  \end{loglogaxis}
\end{tikzpicture}%
  \tikzexternaldisable%

  \end{subfigure}

  \vspace{.5\baselineskip}
  \tikzexternalenable%
  \tikzsetnextfilename{damping_legend}%
  \begin{tikzpicture}
  \begin{axis}[%
    hide axis,
    width  = 1mm,
    height = 1mm,
    scale only axis,
    xmin = 0,
    xmax = 1,
    ymin = 0,
    ymax = 1,
    legend columns = 2, 
    legend style   = {
      at     = {(0,0)},
      anchor = center,
      /tikz/every even column/.append style = {column sep = 0.2cm}},
    legend cell align  = {left},
    clip mode          = individual,
    cycle list name    = damping_plotlist]

    \foreach \y in {1, 2, ..., 5}{
      \addplot+ coordinates{ (0, 0) };
    }
    
    \addlegendentry{Original model}
    \addlegendentry{\soQuadbt{}}
    \addlegendentry{\soQuadbt{} $\intAlpha^{1}\times\intBeta^{1}$}
    \addlegendentry{\soQuadbt{} $\intAlpha^{2}\times \intBeta^{2}$}
  \end{axis}
\end{tikzpicture}%
  \tikzexternaldisable%

  \caption{Frequency response and pointwise relative errors
    for the $r = 10$ reduced-order \soQuadbt{} models of the butterfly gyroscope
    using the true and estimated damping coefficients.}
  \label{fig:ButterflyDampingNumerics}
\end{figure}

\begin{table}[t!]
 \centering
  \resizebox{\linewidth}{!}{
  \begin{tabular}{lcc}
    \hline\noalign{\smallskip} 
      & \multicolumn{1}{c}{$\intAlpha^{1}\times\intBeta^{1}=[0, 10^{-8}]\times[10^{-7}, 10^{-5}]$}
      & \multicolumn{1}{c}{$\intAlpha^{2}\times\intBeta^{2}=[0, 10^{-8}]\times[10^{-8}, 10^{-4}]$}\\
      \noalign{\smallskip}\hline\noalign{\smallskip}
      Estimated $\alphaStar$
      & $8.4361\texttt{e-}9$
      & $1.0000\texttt{e-}8$\\
      Estimated $\betaStar$
      & $9.8017\texttt{e-}9$
      & $3.8284\texttt{e-}5$\\
      \noalign{\smallskip}\hline\noalign{\smallskip}
  \end{tabular}}
  \caption{Estimated Rayleigh damping coefficients $(\alphaStar, \betaStar)$
    for the butterfly gyroscope example over different intervals $\intAlpha^{i}$ and $\intBeta^{i}$.}
  \label{tab:optDampingSolns}
\end{table} 

The minimization is applied over two different pairs of intervals, namely $\intAlpha^{1} \times \intBeta^{1} = [0, 10^{-8}] \times [10^{-7}, 10^{-5}]$ and $\intAlpha^{2} \times \intBeta^{2} = [0, 10^{-8}] \times [10^{-8}, 10^{-4}]$, and the resulting local optima are shown in~\Cref{tab:optDampingSolns}.
Three different \soQuadbt{} reduced-order models are computed based
on the true damping parameters $(\alpha, \beta) = (0, 10^{-6})$ and the two
sets of inferred coefficients from \Cref{tab:optDampingSolns}.
The frequency response and pointwise relative errors of these reduced models
are all shown in \Cref{fig:ButterflyDampingNumerics}.
The pointwise relative errors in \Cref{fig:ButterflyDampingNumerics} reveal
that the \soQuadbt{} reduced model that uses the estimated $\alphaStar$ and
$\betaStar$ corresponding to $\intAlpha^{1}$ and $\intBeta^{1}$ are nearly indistinguishable from the reduced model that
incorporates the true $\alpha = 0$ and $\beta = 10^{-6}$.
On the other hand, the reduced model that uses the coefficients estimated by optimizing over $\intAlpha^{2}$ and $\intBeta^{2}$ performs a few orders of magnitude worse. This is because the particles get stuck in a stationary point of the objective function~\cref{eqn:rayleighOpt}.
This shows that we can use \soQuadbt{} to obtain suitable second-order reduced
models as well as suitable damping coefficients using only the given data if we have a sufficiently good initial search interval.


\subsubsection{Hermitian approach and stability preservation}%
\label{sss:stabPres}

Finally, we test the construction formulas from \Cref{thm:sobtFromData_Hermite}
to validate the stability result in \Cref{thm:stability}.
To make the system symmetric, we modify the input and position-output matrices
to be $\BBu = \BCp = \tfrac{1}{12} \Bone_{n}$, where $\Bone_{n} \in \Rn$ is the
vector of all ones of length $n$.
For data generation, we use $N = 200$ logarithmically spaced points $s_{i}$ in
the interval $-\i[10^{4}, 10^{6}] \cup \i[10^{4}, 10^{6}]$ and take the
left and right quadrature nodes to be such that $\vartheta_{k} = -s_{k}$ and
$\zeta_{j} = s_{j}$, which are symmetrically distributed according
to~\cref{eqn:symmNodes}, and the weights satisfying
$\xi_{i} = \varphi_{i} = \varrho_{i}$.
Specifically, this organizational scheme implies that $s_{i} = -s_{i+1}$, for
$i = 1, 3, \ldots, N - 1$ and so the super and sub-diagonal entries of the
matrices $\bbLM$ and $\bbLK$ require derivatives of $\BG$ as
in~\cref{eqn:dataFormulas_Hermite}.

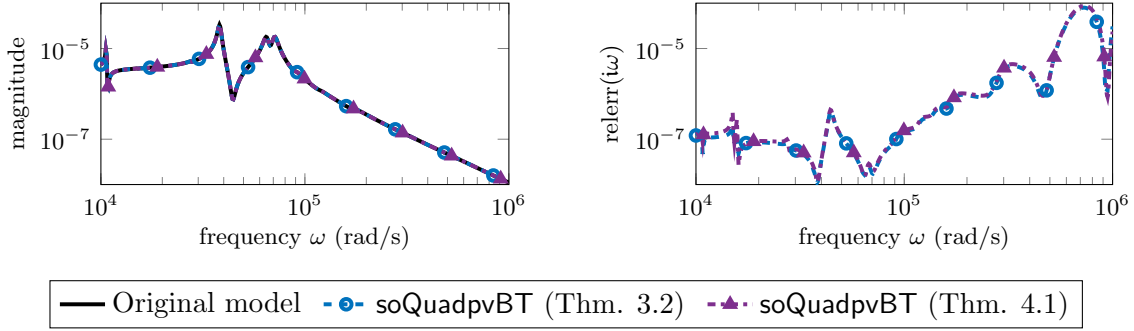
\begin{figure}[t!]
  \centering
  \begin{subfigure}[b]{.49\linewidth}
    \centering
  \tikzexternalenable%
  \tikzsetnextfilename{Butterfly_Hermite_r20_mag}%
  \begin{tikzpicture}[font = \plotfontsize]
  \pgfplotstableread{graphics/data/butterfly_Hermite_r20_N200_1e4to1e6_mag.dat}\tableINPUT
  
  \begin{loglogaxis}[%
    width  = .71\textwidth,
    height = .11\textheight,
    scale only axis,
    xmin = 1e4,
    xmax = 1e6,
    ymin = 1e-8,
    ymax = 1e-4,
    xminorticks = true,
    yminorticks = true,
    xlabel = {frequency $\omega$ (rad/s)},
    ylabel = {magnitude},
    ylabel style   = {yshift = -.3em},
    xlabel style   = {yshift = .3em},
    scaled x ticks = false,
    x tick label style = {/pgf/number format/1000 sep={\,}},
    y tick label style = {/pgf/number format/1000 sep={\,}},
    cycle list name    = plotlist
  ]
  
    \foreach \y in {1, 2, 3}{
      \addplot+ table[x index = 0, y index = \y] {\tableINPUT};
    }
  \end{loglogaxis}
\end{tikzpicture}%
  \tikzexternaldisable%

  \end{subfigure}%
  \hfill%
  \begin{subfigure}[b]{.49\linewidth}
    \centering
  \tikzexternalenable%
  \tikzsetnextfilename{Butterfly_Hermite_r20_error}%
  \begin{tikzpicture}[font = \plotfontsize]
  \pgfplotstableread{graphics/data/butterfly_Hermite_r20_N200_1e4to1e6_error.dat}\tableINPUT
  
  \begin{loglogaxis}[%
    width  = .725\textwidth,
    height = .11\textheight,
    scale only axis,
    xmin = 1e4,
    xmax = 1e6,
    ymin = 1e-8,
    ymax = 1e-4,
    xminorticks = true,
    yminorticks = true,
    xlabel = {frequency $\omega$ (rad/s)},
    ylabel = {$\relerr(\i\omega)$},
    ylabel style   = {yshift = -.3em},
    xlabel style   = {yshift = .3em},
    scaled x ticks = false,
    x tick label style = {/pgf/number format/1000 sep={\,}},
    y tick label style = {/pgf/number format/1000 sep={\,}},
    cycle list name    = plotlist
  ]

  \pgfplotsset{cycle list shift = 1}
  
    \foreach \y in {1, 2}{
      \addplot+ table[x index = 0, y index = \y] {\tableINPUT};
    }
  \end{loglogaxis}
\end{tikzpicture}%
  \tikzexternaldisable%

  \end{subfigure}

  \vspace{.5\baselineskip}
  \tikzexternalenable%
  \tikzsetnextfilename{hermite_legend}%
  \begin{tikzpicture}
  \begin{axis}[%
    hide axis,
    width  = 1mm,
    height = 1mm,
    scale only axis,
    xmin = 0,
    xmax = 1,
    ymin = 0,
    ymax = 1,
    legend columns = 3, 
    legend style   = {
      at     = {(0,0)},
      anchor = center,
      /tikz/every even column/.append style = {column sep = 0.2cm}},
    legend cell align  = {left},
    clip mode          = individual,
    cycle list name    = plotlist]

    \foreach \y in {1, 2, 3}{
      \addplot+ coordinates{ (0, 0) };
    }
    
    \addlegendentry{Original model}
    \addlegendentry{\soQuadbt{} (Thm. 3.2)}
    \addlegendentry{\soQuadbt{} (Thm. 4.1)}
  \end{axis}
\end{tikzpicture}%
  \tikzexternaldisable%

  \caption{Frequency response and pointwise relative errors for $r = 20$
    \soQuadbt{} reduced-order models of the butterfly gyroscope using the data
    matrices from \Cref{thm:sobtFromData} and \Cref{thm:sobtFromData_Hermite}.
    The stability-preserving model performs as well as the standard approach in
    terms of accuracy.}
  \label{fig:ButterflyHermiteNumerics}
\end{figure}

\begin{table}[t!]
  \centering
  
  \resizebox{\linewidth}{!}{
  \begin{tabular}{lcccccc}
    \hline\noalign{\smallskip} 
      & \multicolumn{1}{c}{$r = 10$}
      & \multicolumn{1}{c}{$r = 12$}
      & \multicolumn{1}{c}{$r = 14$}
      & \multicolumn{1}{c}{$r = 16$}
      & \multicolumn{1}{c}{$r = 18$}
      & \multicolumn{1}{c}{$r = 20$}\\
      \noalign{\smallskip}\hline\noalign{\smallskip}
      \soQuadbt{} (\Cref{thm:sobtFromData})
      & \checkmark
      & \checkmark
      & \checkmark
      & \checkmark
      & \checkmark
      & \checkmark\\
      \soQuadbt{} (\Cref{thm:sobtFromData_Hermite})
      & \checkmark
      & \checkmark
      & \checkmark
      & \checkmark
      & \checkmark
      & \checkmark\\
      \noalign{\smallskip}\hline\noalign{\smallskip}
  \end{tabular}}
  \caption{Stability of reduced models computed by \soQuadbt{} using
    \Cref{thm:sobtFromData,thm:sobtFromData_Hermite} for varying orders
    $r = 10, 12, \ldots, 20$.
    Check marks indicate asymptotic stability, and all computed reduced-order
    models turned out to be asymptotically stable.}
  \label{tab:stabCheck}
\end{table}

For varying reduction orders  $r = 10, 12, \ldots, 20$, we compute
reduced-order models via \soQuadbt{} using the setup just described and the
data matrices from~\Cref{thm:sobtFromData_Hermite}, and for the comparison, we
also compute reduced-order models using the setup of \Cref{sss:knownCoeff} and
the data matrices in~\Cref{thm:sobtFromData}.
In all cases, we employ the realification procedure from \Cref{sec:realmodel}
to compute real-valued surrogates.
The response and pointwise relative errors~\cref{eqn:relerr} of the
$r = 20$ reduced-order models are plotted in \Cref{fig:ButterflyHermiteNumerics}.
As illustrated by the figure, the \soQuadbt{} reduced models using either set
of data matrices provide very satisfactory approximations over the entire
frequency range, and are nearly indistinguishable from each other.
In \Cref{tab:stabCheck}, we report the stability properties of the
reduced-order models.
As expected from \Cref{thm:stability}, the \soQuadbt{} models constructed via
\Cref{thm:sobtFromData_Hermite} are all asymptotically stable by construction. 
Although not guaranteed, the \soQuadbt{} reduced models constructed using
\Cref{thm:sobtFromData} turned out to be asymptotically stable, as well.


\subsection{Plate with tuned vibration absorbers}%
\label{sec:plateTVA}

As a second example, we consider the model of the vibrational response of
a strutted plate from~\cite[Sec.~4.2]{AumW23}.
We refer the reader to~\cite[Sec.~4.2]{AumW23} for the material parameters and
model details.
The model is a second-order system~\cref{eqn:soSys} with
$n = 209\,100$ states, $m = 1$ input and $p = 1$ position output, and
it uses the structural (hysteretic) damping formulation with
$g(s) = (\i \eta) / s$ for $\eta = 10^{-3}$.
The tuned vibration absorbers (TVAs) connected to the plate reduce
the vibrational response in the frequency region about $48$\,Hz.

In this example, we only employ the data-driven methods \soQuadbt{},
\soLoewner{} and \foQuadbt{} and omit the intrusive model reduction counterparts due to the lack of available solvers for the algebraic Lyapunov equations with complex-valued coefficient matrices.
Furthermore, we do not employ the realification process here because the
underlying hysteretic damping leads to the full-order model being complex
valued.
For generating the data, we choose $N = 250$ left and right quadrature nodes
that are interwoven linearly spaced points in the interval
$-2 \pi \i[0, 250] \cup 2 \pi \i[0, 250]$ and symmetrically distributed
according to~\cref{eqn:symmNodes}.
We use the aforementioned methods to compute reduced-order models of dimension
$r = 75$.
The resulting frequency responses and pointwise relative errors are
plotted in \Cref{fig:plateNumerics}.
Similar to the previous example, the structure-preserving approaches
\soQuadbt{} and \soLoewner{} outperform the unstructured approach \foQuadbt{}
for most frequencies by about five orders of magnitude.
The relative error measures reported in \Cref{tab:PlateRelErrors} also support
this claim.
We observe that all three reduced models struggle to produce a good
approximation in the region around $48$\,Hz, which is due to the TVAs that
heavily damp the frequency response about that region.

\begin{figure}[t!]
  \centering
  \begin{subfigure}[b]{.49\linewidth}
    \centering
  \tikzexternalenable%
  \tikzsetnextfilename{soplate_r75_mag}%
  \begin{tikzpicture}[font = \plotfontsize]
  \pgfplotstableread{graphics/data/plateTVA_r75_N250_0to250Hz_mag.dat}\tableINPUT
  
  \begin{axis}[%
    width  = .725\textwidth,
    height = .11\textheight,
    scale only axis,
    xmin = 0,
    xmax = 250,
    ymin = -10,
    ymax = 48,
    xminorticks = true,
    yminorticks = true,
    xlabel = {frequency (Hz)},
    ylabel = {magnitude (dB)},
    ylabel style   = {yshift = -.3em},
    xlabel style   = {yshift = .3em},
    scaled x ticks = false,
    x tick label style = {/pgf/number format/1000 sep={\,}},
    y tick label style = {/pgf/number format/1000 sep={\,}},
    cycle list name    = plotlist
  ]
  
    \foreach \y in {1, 2, ..., 4}{
      \addplot+ table[x index = 0, y index = \y] {\tableINPUT};
    }
  \end{axis}
\end{tikzpicture}%
  \tikzexternaldisable%

  \end{subfigure}%
  \hfill%
  \begin{subfigure}[b]{.49\linewidth}
    \centering
  \tikzexternalenable%
  \tikzsetnextfilename{soplate_r75_errors}%
  \begin{tikzpicture}[font = \plotfontsize]
  \pgfplotstableread{graphics/data/plateTVA_r75_N250_0to250Hz_error.dat}\tableINPUT
  
  \begin{semilogyaxis}[%
    width  = .725\textwidth,
    height = .11\textheight,
    scale only axis,
    xmin = 0,
    xmax = 250,
    ymin = 10e-12,
    ymax = 10e1,
    xminorticks = true,
    yminorticks = true,
    xlabel = {frequency (Hz)},
    ylabel = {$\relerr(\i\omega)$},
    ylabel style   = {yshift = -.3em},
    xlabel style   = {yshift = .3em},
    scaled x ticks = false,
    x tick label style = {/pgf/number format/1000 sep={\,}},
    y tick label style = {/pgf/number format/1000 sep={\,}},
    cycle list name    = plateplotlist
  ]

  \pgfplotsset{cycle list shift = 1}
  
    \foreach \y in {1, 2, 3}{
      \addplot+ table[x index = 0, y index = \y] {\tableINPUT};
    }
  \end{semilogyaxis}
\end{tikzpicture}%
  \tikzexternaldisable%

  \end{subfigure}

  \vspace{.5\baselineskip}
  \tikzexternalenable%
  \tikzsetnextfilename{plate_legend}%
  \begin{tikzpicture}
  \begin{axis}[%
    hide axis,
    width  = 1mm,
    height = 1mm,
    scale only axis,
    xmin = 0,
    xmax = 1,
    ymin = 0,
    ymax = 1,
    legend columns = 4, 
    legend style   = {
      at     = {(0,0)},
      anchor = center,
      /tikz/every even column/.append style = {column sep = 0.2cm}},
    legend cell align  = {left},
    clip mode          = individual,
    cycle list name    = plotlist]

    \foreach \y in {1, 2, ..., 4}{
      \addplot+ coordinates{ (0, 0) };
    }
    
    \addlegendentry{Original model}
    \addlegendentry{\soQuadbt{}}
    \addlegendentry{\soLoewner{}}
    \addlegendentry{\foQuadbt{}}
  \end{axis}
\end{tikzpicture}%
  \tikzexternaldisable%

  \caption{Frequency response and pointwise relative errors for $r = 75$
    reduced-order models of the plate with TVAs.
    The structured approaches clearly outperform the unstructured
    \foQuadbt{} for most frequencies by about five orders of magnitude.}
  \label{fig:plateNumerics}
\end{figure}
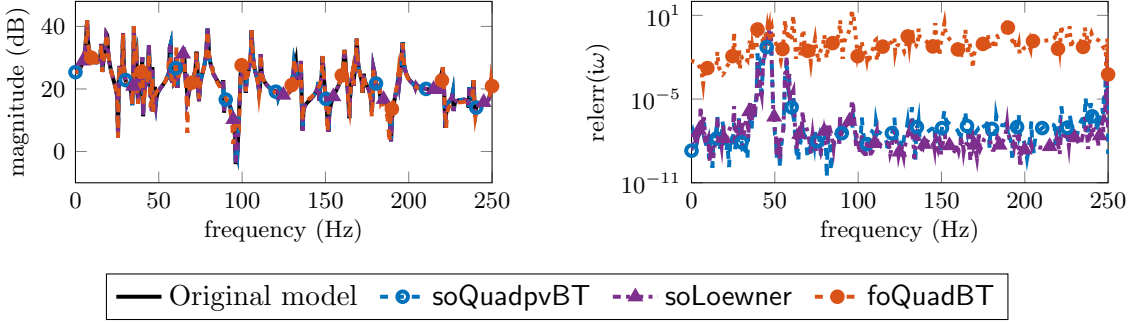

\begin{table}[t!]
  \centering
  \begin{tabular}{lllll}
    \hline\noalign{\smallskip}
      & \multicolumn{1}{c}{\soQuadbt}
      & \multicolumn{1}{c}{\soLoewner}
      & \multicolumn{1}{c}{\foQuadbt}\\
      \noalign{\smallskip}\hline\noalign{\smallskip}
      $\relerr_{\CH_\infty}$
      & $2.0148\texttt{e-}9$
      & $\boldsymbol{1.8776\texttt{e-}9}$
      & $8.7564\texttt{e-}8$\\
      $\relerr_{\CH_2}$
      & $2.7231\texttt{e-}10$
      & $\boldsymbol{2.5159\texttt{e-}10}$
      & $1.3176\texttt{e-}8$\\
      \noalign{\smallskip}\hline\noalign{\smallskip}
  \end{tabular}
  \caption{Relative $\CH_{\infty}$ errors~\cref{eqn:Hinftyrelerr} and
    $\CH_2$ errors~\cref{eqn:H2relerr} for $r = 75$ reduced-order models of
    the plate with TVAs.
    The smallest error is highlighted in \textbf{boldface}.}
  \label{tab:PlateRelErrors}
\end{table}


\subsection{Mass-spring-damper network with velocity outputs}%
\label{sec:msd}

As the final example, we consider the mass-spring damper (\MSD{}) chain
from~\cite[Ex.~2]{TruV09}. 
This is a damped linear vibration system consisting of three rows of $d$ masses
all connected to the right-hand side of a coupling mass $m_{0}$. 
The masses in the individual rows are connected by springs, and the coupling
mass $m_{0}$ is connected to the fixed left-hand side of the base.
The system order is $n = 3d + 1$.
We construct a system with $d = 300$ masses so that the full-order model
dimension is $n=901$. 
We impose Rayleigh damping on the system for the parameters
$f(s) = g(s) = 0.002$, the input $\BBu = \Bone_{n} \in \Rn$ is the vector of
all ones, and we have the outputs $\BCp = \Bzero$ and $\BCv=\Bone_{n}\in\Rn$. 
We include this example to illustrate how our method performs when velocity
outputs are incorporated.

\begin{figure}[t!]
  \centering
  \begin{subfigure}[b]{.49\linewidth}
    \centering
  \tikzexternalenable%
  \tikzsetnextfilename{MSD_Cv_r20_mag}%
  \begin{tikzpicture}[font = \plotfontsize]
  \pgfplotstableread{graphics/data/MSD_Cv_r20_N200_1e-3to1e1_mag.dat}\tableINPUT
  
  \begin{loglogaxis}[%
    width  = .725\textwidth,
    height = .11\textheight,
    scale only axis,
    xmin = 1e-3,
    xmax = 1e+1,
    ymin = 1e1,
    ymax = 1e6,
    xminorticks = true,
    yminorticks = true,
    xlabel = {frequency $\omega$ (rad/s)},
    ylabel = {magnitude},
    ylabel style   = {yshift = -.3em},
    xlabel style   = {yshift = .3em},
    scaled x ticks = false,
    x tick label style = {/pgf/number format/1000 sep={\,}},
    y tick label style = {/pgf/number format/1000 sep={\,}},
    cycle list name    = plotlist
  ]
  
    \foreach \y in {1, 2, ..., 6}{
      \addplot+ table[x index = 0, y index = \y] {\tableINPUT};
    }
  \end{loglogaxis}
\end{tikzpicture}%
  \tikzexternaldisable%

  \end{subfigure}%
  \hfill%
  \begin{subfigure}[b]{.49\linewidth}
    \centering
  \tikzexternalenable%
  \tikzsetnextfilename{MSD_Cv_r20_error}%
  \begin{tikzpicture}[font = \plotfontsize]
  \pgfplotstableread{graphics/data/MSD_Cv_r20_N200_1e-3to1e1_error.dat}\tableINPUT
  
  \begin{loglogaxis}[%
    width  = .725\textwidth,
    height = .11\textheight,
    scale only axis,
    xmin = 1e-3,
    xmax = 1e+1,
    ymin = 1e-6,
    ymax = 1e0,
    xminorticks = true,
    yminorticks = true,
    xlabel = {frequency $\omega$ (rad/s)},
    ylabel = {$\relerr(\i\omega)$},
    ylabel style   = {yshift = -.3em},
    xlabel style   = {yshift = .3em},
    scaled x ticks = false,
    x tick label style = {/pgf/number format/1000 sep={\,}},
    y tick label style = {/pgf/number format/1000 sep={\,}},
    cycle list name    = plotlist
  ]

  \pgfplotsset{cycle list shift = 1}
  
    \foreach \y in {1, 2, 3, 4, 5}{
      \addplot+ table[x index = 0, y index = \y] {\tableINPUT};
    }
  \end{loglogaxis}
\end{tikzpicture}%
  \tikzexternaldisable%

  \end{subfigure}

  \vspace{.5\baselineskip}
  \tikzexternalenable%
  \tikzsetnextfilename{legend}%
  \begin{tikzpicture}
  \begin{axis}[%
    hide axis,
    width  = 1mm,
    height = 1mm,
    scale only axis,
    xmin = 0,
    xmax = 1,
    ymin = 0,
    ymax = 1,
    legend columns = 3, 
    legend style   = {
      at     = {(0,0)},
      anchor = center,
      /tikz/every even column/.append style = {column sep = 0.2cm}},
    legend cell align  = {left},
    clip mode          = individual,
    cycle list name    = plotlist]

    \foreach \y in {1, 2, ..., 6}{
      \addplot+ coordinates{ (0, 0) };
    }
    
    \addlegendentry{Original model}
    \addlegendentry{\soQuadbt{}}
    \addlegendentry{\soLoewner{}}
    \addlegendentry{\foQuadbt{}}
    \addlegendentry{\soBt{}}
    \addlegendentry{\Btr{}}
  \end{axis}
\end{tikzpicture}%
  \tikzexternaldisable%

  \caption{Frequency response and pointwise relative errors for $r = 20$
    reduced-order models of the coupled \MSD{} network with a velocity output.
    The proposed \soQuadbt{} approach yields similar performance to its
    intrusive counterpart \soBt{}.}
  \label{fig:MSDNumerics}
\end{figure}
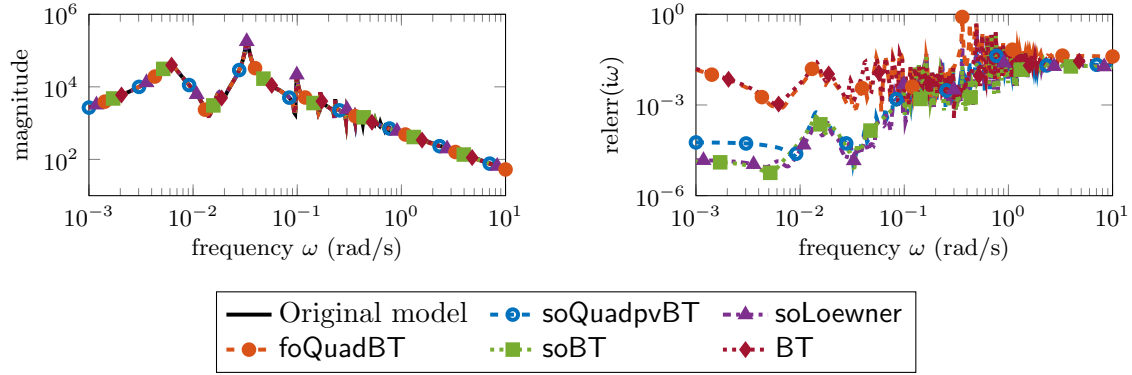

\begin{table}[t!]
  \centering
  \begin{tabular}{llllll}
    \hline\noalign{\smallskip}
      & \multicolumn{1}{c}{\soQuadbt}
      & \multicolumn{1}{c}{\soLoewner}
      & \multicolumn{1}{c}{\foQuadbt}
      & \multicolumn{1}{c}{\sopvBt}
      & \multicolumn{1}{c}{\Btr}\\
      \noalign{\smallskip}\hline\noalign{\smallskip}
      $\relerr_{\CH_\infty}$
      & $1.2550\texttt{e-}3$
      & $2.9718\texttt{e-}3$
      & $4.2038\texttt{e-}3$
      & $\boldsymbol{9.3830\texttt{e-}4}$
      & $3.6199\texttt{e-}3$\\
      $\relerr_{\CH_2}$
      & $1.0782\texttt{e-}3$
      & $1.5801\texttt{e-}3$
      & $4.2038\texttt{e-}3$
      & $\boldsymbol{8.5111\texttt{e-}4}$
      & $4.8201\texttt{e-}3$\\
      \noalign{\smallskip}\hline\noalign{\smallskip}
  \end{tabular}
  \caption{Relative $\CH_{\infty}$ errors~\cref{eqn:Hinftyrelerr} and
  $\CH_2$ errors~\cref{eqn:H2relerr} for $r = 20$ reduced-order models of the
  \MSD{} chain.
  The smallest error is highlighted in \textbf{boldface}.}
  \label{tab:MSDErrors}
\end{table}

We compute reduced-order models of size $r=20$ using
\soQuadbt{}, \soLoewner{}, \foQuadbt{}, \sopvBt{} and \Btr{}. 
For generating the data, we choose $N = 200$ left and right quadrature nodes as
interwoven logarithmically spaced points in the interval
$-\i[10^{-3}, 10^{1}] \cup \i[10^{-3}, 10^{1}]$, which are symmetrically
distributed according to~\cref{eqn:symmNodes}.
The frequency response of the full and reduced-order transfer functions as well
as their pointwise relative errors~\cref{eqn:relerr} are plotted in
\Cref{fig:MSDNumerics}.
All methods produce reasonably good approximations to the full-order system
response.
As was the case for the previous two benchmark problems, we again observe that
the structure-preserving methods \soQuadbt{}, \soLoewner{}, and \soBt{}
outperform the unstructured methods \foQuadbt{} and \Btr{}.
This performance gap is particularly evident at the lower frequencies; all four
methods exhibit roughly similar behavior from $1$\,rad/s onward. 
The relative error measures are reported in \Cref{tab:MSDErrors}, which shows
that \sopvBt{} performs the best overall.


\section{Conclusions}%
\label{sec:conclusions}

We introduced a data-driven formulation of the second-order position-velocity 
balanced truncation approach for the construction of structured 
reduced-order systems solely from transfer function data.
The approach is based on the quadrature formulation of the Gramian integrals
used in the balancing process.
We provide several modifications of the approach for practical situations
including the preservation of stability in the case of symmetric second-order
systems, the construction of real-valued systems, and the inference of optimal
damping parameters from given data.
The numerical experiments verify that the data-driven formulation of the
position-velocity balanced truncation is indistinguishable from its intrusive
counterpart for accurate enough quadrature rules, and they show the
effectiveness of the approach to construct structured reduced-order models,
which successfully outperform unstructured data-driven models of the same size.


\addcontentsline{toc}{section}{References}
\bibliographystyle{plainurl}
\bibliography{root}

\begin{thebibliography}{10}

\bibitem{AckGGetal25}
M.~S. Ackermann, I.~V. Gosea, S.~Gugercin, and S.~W.~R. Werner.
\newblock Second-order {AAA} algorithms for structured data-driven modeling.
\newblock e-print 2506.02241, arXiv, 2025.
\newblock Numerical Analysis (math.NA).
\newblock \href {https://doi.org/10.48550/arXiv.2506.02241}
  {\path{doi:10.48550/arXiv.2506.02241}}.

\bibitem{Ant05}
A.~C. Antoulas.
\newblock {\em Approximation of Large-Scale Dynamical Systems}, volume~6 of
  {\em Adv. Des. Control}.
\newblock SIAM, Philadelphia, PA, 2005.
\newblock \href {https://doi.org/10.1137/1.9780898718713}
  {\path{doi:10.1137/1.9780898718713}}.

\bibitem{AumW23}
Q.~Aumann and S.~W.~R. Werner.
\newblock Structured model order reduction for vibro-acoustic problems using
  interpolation and balancing methods.
\newblock {\em J. Sound Vib.}, 543:117363, 2023.
\newblock \href {https://doi.org/10.1016/j.jsv.2022.117363}
  {\path{doi:10.1016/j.jsv.2022.117363}}.

\bibitem{BaiMS05}
Z.~Bai, K.~Meerbergen, and Y.~Su.
\newblock {A}rnoldi methods for structure-preserving dimension reduction of
  second-order dynamical systems.
\newblock In P.~Benner, V.~Mehrmann, and D.~C. Sorensen, editors, {\em
  Dimension Reduction of Large-Scale Systems}, volume~45 of {\em Lect. Notes
  Comput. Sci. Eng.}, pages 173--189. Springer, Berlin, Heidelberg, 2005.
\newblock \href {https://doi.org/10.1007/3-540-27909-1_7}
  {\path{doi:10.1007/3-540-27909-1_7}}.

\bibitem{BaiS05a}
Z.~Bai and Y.~Su.
\newblock Dimension reduction of large-scale second-order dynamical systems via
  a second-order {A}rnoldi method.
\newblock {\em {SIAM} J. Sci. Comput.}, 26(5):1692--1709, 2005.
\newblock \href {https://doi.org/10.1137/040605552}
  {\path{doi:10.1137/040605552}}.

\bibitem{BeaB14}
C.~Beattie and P.~Benner.
\newblock {$\mathcal{H}_2$}-optimality conditions for structured dynamical
  systems.
\newblock Preprint MPIMD/14-18, Max Planck Institute for Dynamics of Complex
  Technical Systems Magdeburg, 2014.
\newblock URL: \url{https://csc.mpi-magdeburg.mpg.de/preprints/2014/18/}.

\bibitem{BeaG09}
C.~A. Beattie and S.~Gugercin.
\newblock Interpolatory projection methods for structure-preserving model
  reduction.
\newblock {\em Syst. Control Lett.}, 58(3):225--232, 2009.
\newblock \href {https://doi.org/10.1016/j.sysconle.2008.10.016}
  {\path{doi:10.1016/j.sysconle.2008.10.016}}.

\bibitem{BenB17}
P.~Benner and T.~Breiten.
\newblock Model order reduction based on system balancing.
\newblock In P.~Benner, M.~Ohlberger, A.~Cohen, and K.~Willcox, editors, {\em
  Model Reduction and Approximation: {T}heory and Algorithms}, Computational
  Science \& Engineering, pages 261--295. SIAM, Philadelphia, PA, 2017.
\newblock \href {https://doi.org/10.1137/1.9781611974829.ch6}
  {\path{doi:10.1137/1.9781611974829.ch6}}.

\bibitem{BenW21b}
P.~Benner and S.~W.~R. Werner.
\newblock Frequency- and time-limited balanced truncation for large-scale
  second-order systems.
\newblock {\em Linear Algebra Appl.}, 623:68--103, 2021.
\newblock Special issue in honor of P.~Van~Dooren, Edited by F.~Dopico,
  D.~Kressner, N.~Mastronardi, V.~Mehrmann, and R.~Vandebril.
\newblock \href {https://doi.org/10.1016/j.laa.2020.06.024}
  {\path{doi:10.1016/j.laa.2020.06.024}}.

\bibitem{BenW21c}
P.~Benner and S.~W.~R. Werner.
\newblock {MORLAB}---{T}he {M}odel {O}rder {R}eduction {LAB}oratory.
\newblock In P.~Benner, T.~Breiten, H.~Fa{\ss}bender, M.~Hinze, T.~Stykel, and
  R.~Zimmermann, editors, {\em Model Reduction of Complex Dynamical Systems},
  volume 171 of {\em International Series of Numerical Mathematics}, pages
  393--415. Birkh{\"a}user, Cham, 2021.
\newblock \href {https://doi.org/10.1007/978-3-030-72983-7_19}
  {\path{doi:10.1007/978-3-030-72983-7_19}}.

\bibitem{Bil05}
D.~Billger.
\newblock The butterfly gyro.
\newblock In P.~Benner, V.~Mehrmann, and D.~C. Sorensen, editors, {\em
  Dimension Reduction of Large-Scale Systems}, volume~45 of {\em Lect. Notes
  Comput. Sci. Eng.}, pages 349--352. Springer, Berlin, Heidelberg, 2005.
\newblock \href {https://doi.org/10.1007/3-540-27909-1_18}
  {\path{doi:10.1007/3-540-27909-1_18}}.

\bibitem{Bla18a}
F.~Blaabjerg.
\newblock {\em Control of Power Electronic Converters and Systems: {V}olume 2}.
\newblock Academic Press, London, 2018.
\newblock \href {https://doi.org/10.1016/C2017-0-04756-0}
  {\path{doi:10.1016/C2017-0-04756-0}}.

\bibitem{Bre16}
T.~Breiten.
\newblock Structure-preserving model reduction for integro-differential
  equations.
\newblock {\em {SIAM} J. Control Optim.}, 54(6):2992--3015, 2016.
\newblock \href {https://doi.org/10.1137/15M1032296}
  {\path{doi:10.1137/15M1032296}}.

\bibitem{ChaLVetal06}
Y.~Chahlaoui, D.~Lemonnier, A.~Vandendorpe, and P.~Van~Dooren.
\newblock Second-order balanced truncation.
\newblock {\em Linear Algebra Appl.}, 415(2--3):373--384, 2006.
\newblock \href {https://doi.org/10.1016/j.laa.2004.03.032}
  {\path{doi:10.1016/j.laa.2004.03.032}}.

\bibitem{DunS88}
N.~Dunford and J.~T. Schwartz.
\newblock {\em Linear Operators. Part I: General Theory}.
\newblock John Wiley \& Sons, New York, NY, 1988.

\bibitem{Enn84}
D.~F. Enns.
\newblock Model reduction with balanced realizations: {A}n error bound and a
  frequency weighted generalization.
\newblock In {\em The 23rd IEEE Conference on Decision and Control}, pages
  127--132, 1984.
\newblock \href {https://doi.org/10.1109/CDC.1984.272286}
  {\path{doi:10.1109/CDC.1984.272286}}.

\bibitem{FilPGetal23}
Y.~Filanova, I.~Pontes~Duff, P.~Goyal, and P.~Benner.
\newblock An operator inference oriented approach for linear mechanical
  systems.
\newblock {\em Mech. Syst. Signal Process.}, 200:110620, 2023.
\newblock \href {https://doi.org/10.1016/j.ymssp.2023.110620}
  {\path{doi:10.1016/j.ymssp.2023.110620}}.

\bibitem{GolV13}
G.~H. Golub and C.~F. Van~Loan.
\newblock {\em Matrix Computations}.
\newblock Johns Hopkins Studies in the Mathematical Sciences. Johns Hopkins
  University Press, Baltimore, fourth edition, 2013.

\bibitem{GosGB22}
I.~V. Gosea, S.~Gugercin, and C.~Beattie.
\newblock Data-driven balancing of linear dynamical systems.
\newblock {\em {SIAM} J. Sci. Comput.}, 44(1):A554--A582, 2022.
\newblock \href {https://doi.org/10.1137/21M1411081}
  {\path{doi:10.1137/21M1411081}}.

\bibitem{GosGW24}
I.~V. Gosea, S.~Gugercin, and S.~W.~R. Werner.
\newblock Structured barycentric forms for interpolation-based data-driven
  reduced modeling of second-order systems.
\newblock {\em Adv. Comput. Math.}, 50(2):26, 2024.
\newblock \href {https://doi.org/10.1007/s10444-024-10118-7}
  {\path{doi:10.1007/s10444-024-10118-7}}.

\bibitem{GugA04}
S.~Gugercin and A.~C. Antoulas.
\newblock A survey of model reduction by balanced truncation and some new
  results.
\newblock {\em Int. J. Control}, 77(8):748--766, 2004.
\newblock \href {https://doi.org/10.1080/00207170410001713448}
  {\path{doi:10.1080/00207170410001713448}}.

\bibitem{LauA84}
A.~J. Laub and W.~F. Arnold.
\newblock Controllability and observability criteria for multivariable linear
  second-order models.
\newblock {\em {IEEE} Trans. Autom. Control}, 29(2):163--165, 1984.
\newblock \href {https://doi.org/10.1109/TAC.1984.1103470}
  {\path{doi:10.1109/TAC.1984.1103470}}.

\bibitem{LauHPetal87}
A.~J. Laub, M.~T. Heath, C.~C. Paige, and R.~C. Ward.
\newblock Computation of system balancing transformations and other
  applications of simultaneous diagonalization algorithms.
\newblock {\em {IEEE} Trans. Autom. Control}, 32(2):115--122, 1987.
\newblock \href {https://doi.org/10.1109/TAC.1987.1104549}
  {\path{doi:10.1109/TAC.1987.1104549}}.

\bibitem{MeyS96}
D.~G. Meyer and S.~Srinivasan.
\newblock Balancing and model reduction for second-order form linear systems.
\newblock {\em {IEEE} Trans. Autom. Control}, 41(11):1632--1644, 1996.
\newblock \href {https://doi.org/10.1109/9.544000}
  {\path{doi:10.1109/9.544000}}.

\bibitem{Moo81}
B.~C. Moore.
\newblock Principal component analysis in linear systems: controllability,
  observability, and model reduction.
\newblock {\em {IEEE} Trans. Autom. Control}, AC--26(1):17--32, 1981.
\newblock \href {https://doi.org/10.1109/TAC.1981.1102568}
  {\path{doi:10.1109/TAC.1981.1102568}}.

\bibitem{MulR76}
C.~T. Mullis and R.~A. Roberts.
\newblock Synthesis of minimum roundoff noise fixed point digital filters.
\newblock {\em {IEEE} Trans. Circuits Syst.}, 23(9):551--562, 1976.
\newblock \href {https://doi.org/10.1109/TCS.1976.1084254}
  {\path{doi:10.1109/TCS.1976.1084254}}.

\bibitem{PasA08}
B.~Pascual and S.~Adhikari.
\newblock Dynamic response of structures with frequency dependent damping
  models.
\newblock In {\em 49th {AIAA}/{ASME}/{ASCE}/{AHS}/{ASC} Structures, Structural
  Dynamics, and Materials Conference}, page 2189, 2008.
\newblock \href {https://doi.org/10.2514/6.2008-2189}
  {\path{doi:10.2514/6.2008-2189}}.

\bibitem{PerS82}
L.~Pernebo and L.~M. Silverman.
\newblock Model reduction via balanced state space representations.
\newblock {\em {IEEE} Trans. Autom. Control}, 27(2):382--387, 1982.
\newblock \href {https://doi.org/10.1109/TAC.1982.1102945}
  {\path{doi:10.1109/TAC.1982.1102945}}.

\bibitem{PonGB22}
I.~Pontes~Duff, P.~Goyal, and P.~Benner.
\newblock Data-driven identification of {R}ayleigh-damped second-order systems.
\newblock In C.~Beattie, P.~Benner, M.~Embree, S.~Gugercin, and S.~Lefteriu,
  editors, {\em Realization and Model Reduction of Dynamical Systems}, pages
  255--272. Springer, Cham, 2022.
\newblock \href {https://doi.org/10.1007/978-3-030-95157-3_14}
  {\path{doi:10.1007/978-3-030-95157-3_14}}.

\bibitem{PrzPB24}
J.~Przybilla, I.~Pontes~Duff, and P.~Benner.
\newblock Model reduction for second-order systems with inhomogeneous initial
  conditions.
\newblock {\em Syst. Control Lett.}, 183:105671, 2024.
\newblock \href {https://doi.org/10.1016/j.sysconle.2023.105671}
  {\path{doi:10.1016/j.sysconle.2023.105671}}.

\bibitem{ReiS08}
T.~Reis and T.~Stykel.
\newblock Balanced truncation model reduction of second-order systems.
\newblock {\em Math. Comput. Model. Dyn. Syst.}, 14(5):391--406, 2008.
\newblock \href {https://doi.org/10.1080/13873950701844170}
  {\path{doi:10.1080/13873950701844170}}.

\bibitem{supReiW26}
S.~Reiter and S.~W.~R. Werner.
\newblock Code, data and results for numerical experiments in ``{D}ata-driven
  balanced truncation for second-order systems with generalized proportional
  damping'' (version 1.1), January 2026.
\newblock \href {https://doi.org/10.5281/zenodo.18148591}
  {\path{doi:10.5281/zenodo.18148591}}.

\bibitem{Rei25}
S.~J. Reiter.
\newblock {\em Dimension Reduction in Structured Dynamical Systems:
  {O}ptimal-{$\mathcal{H}_{2}$} Approximation, Data-Driven Balancing, and
  Real-Time Monitoring}.
\newblock {D}issertation, Virginia Polytechnic Institute and State University,
  Blacksburg, Virginia, USA, 2025.
\newblock URL: \url{https://hdl.handle.net/10919/134223}.

\bibitem{SaaSW19}
J.~Saak, D.~Siebelts, and S.~W.~R. Werner.
\newblock A comparison of second-order model order reduction methods for an
  artificial fishtail.
\newblock {\em at-Auto\-mati\-sie\-rungs\-tech\-nik}, 67(8):648--667, 2019.
\newblock \href {https://doi.org/10.1515/auto-2019-0027}
  {\path{doi:10.1515/auto-2019-0027}}.

\bibitem{SchU16}
P.~Schulze and B.~Unger.
\newblock Data-driven interpolation of dynamical systems with delay.
\newblock {\em Syst. Control Lett.}, 97:125--131, 2016.
\newblock \href {https://doi.org/10.1016/j.sysconle.2016.09.007}
  {\path{doi:10.1016/j.sysconle.2016.09.007}}.

\bibitem{SchUBetal18}
P.~Schulze, B.~Unger, C.~Beattie, and S.~Gugercin.
\newblock Data-driven structured realization.
\newblock {\em Linear Algebra Appl.}, 537:250--286, 2018.
\newblock \href {https://doi.org/10.1016/j.laa.2017.09.030}
  {\path{doi:10.1016/j.laa.2017.09.030}}.

\bibitem{ShaNTetal24}
H.~Sharma, D.~A. Najera-Flores, M.~D. Todd, and B.~Kramer.
\newblock {L}agrangian operator inference enhanced with structure-preserving
  machine learning for nonintrusive model reduction of mechanical systems.
\newblock {\em Comput. Methods Appl. Mech. Eng.}, 423:116865, 2024.
\newblock \href {https://doi.org/10.1016/j.cma.2024.116865}
  {\path{doi:10.1016/j.cma.2024.116865}}.

\bibitem{ShaWK22}
H.~Sharma, Z.~Wang, and B.~Kramer.
\newblock {H}amiltonian operator inference: {P}hysics-preserving learning of
  reduced-order models for canonical {H}amiltonian systems.
\newblock {\em Phys. D: Nonlinear Phenom.}, 431:133122, 2022.
\newblock \href {https://doi.org/10.1016/j.physd.2021.133122}
  {\path{doi:10.1016/j.physd.2021.133122}}.

\bibitem{TomP87}
M.~S. Tombs and I.~Postlethwaite.
\newblock Truncated balanced realization of a stable non-minimal state-space
  system.
\newblock {\em Int. J. Control}, 46(4):1319--1330, 1987.
\newblock \href {https://doi.org/10.1080/00207178708933971}
  {\path{doi:10.1080/00207178708933971}}.

\bibitem{TruV09}
N.~Truhar and K.~Veseli{\'c}.
\newblock An efficient method for estimating the optimal dampers' viscosity for
  linear vibrating systems using {L}yapunov equation.
\newblock {\em {SIAM} J. Matrix Anal. Appl.}, 31(1):18--39, 2009.
\newblock \href {https://doi.org/10.1137/070683052}
  {\path{doi:10.1137/070683052}}.

\bibitem{WanYWetal25}
X.~Wang, X.~Yang, X.~Wang, and B.~Song.
\newblock Data-driven balanced truncation for second-order systems via the
  approximate {G}ramians.
\newblock e-print 2506.03855, arXiv, 2025.
\newblock Numerical Analysis (math.NA).
\newblock \href {https://doi.org/10.48550/arXiv.2506.03855}
  {\path{doi:10.48550/arXiv.2506.03855}}.

\bibitem{Wer21}
S.~W.~R. Werner.
\newblock {\em Structure-Preserving Model Reduction for Mechanical Systems}.
\newblock {D}issertation, Otto-von-Guericke-Universit{\"a}t, Magdeburg,
  Germany, 2021.
\newblock \href {https://doi.org/10.25673/38617} {\path{doi:10.25673/38617}}.

\bibitem{WerGG22}
S.~W.~R. Werner, I.~V. Gosea, and S.~Gugercin.
\newblock Structured vector fitting framework for mechanical systems.
\newblock {\em IFAC-Pap.}, 55(20):163--168, 2022.
\newblock 10th Vienna International Conference on Mathematical Modelling
  {MATHMOD} 2022.
\newblock \href {https://doi.org/10.1016/j.ifacol.2022.09.089}
  {\path{doi:10.1016/j.ifacol.2022.09.089}}.

\bibitem{Wya12}
S.~Wyatt.
\newblock {\em Issues in Interpolatory Model Reduction: {I}nexact Solves,
  Second-order Systems and {DAE}s}.
\newblock PhD thesis, Virginia Polytechnic Institute and State University,
  Blacksburg, Virginia, USA, 2012.
\newblock URL: \url{http://hdl.handle.net/10919/27668}.

\bibitem{YueM12}
Y.~Yue and K.~Meerbergen.
\newblock Using {K}rylov-{P}ad{\'e} model order reduction for accelerating
  design optimization of structures and vibrations in the frequency domain.
\newblock {\em Int. J. Numer. Methods Eng.}, 90(10):1207--1232, 2012.
\newblock \href {https://doi.org/10.1002/nme.3357}
  {\path{doi:10.1002/nme.3357}}.

\end{thebibliography}
  
\end{document}